\tikzset{commutative diagrams/.cd,arrow style=tikz,diagrams={>=stealth'}}
\tikzset{middlearrow/.style={decoration={markings,mark=at position #1 with {\arrow[very thick]{stealth' reversed}}},postaction={decorate}}}
\tikzset{middlearrowrev/.style={decoration={markings,mark=at position #1 with {\arrow[very thick]{stealth'}}},postaction={decorate}}}
\renewcommand{\epsilon}{\ensuremath{\varepsilon}}
\renewcommand{\to}{\ensuremath{\longrightarrow}}
\newcommand{\R}{\ensuremath{\mathbb R}}
\newcommand{\T}{\ensuremath{\mathbb{T}^{2}}}
\newcommand{\rp}{\ensuremath{{\mathbb R}P^2}}
\newcommand{\N}{\ensuremath{\mathbb N}}
\newcommand{\Z}{\ensuremath{\mathbb Z}}
\newcommand{\dt}{\ensuremath{\mathbb D}^{\,2}}
\newcommand{\St}[1][2]{\ensuremath{\mathbb S}^{#1}}
\newcommand{\FF}{\ensuremath{\mathbb F}}
\newcommand{\F}[1][n]{\ensuremath{\FF_{{#1}}}}
\newcommand{\vide}{\ensuremath{\varnothing}}
\newcommand{\im}[1]{\ensuremath{\operatorname{\text{Im}}\left({#1}\right)}}
\renewcommand{\ker}[1]{\ensuremath{\operatorname{\text{Ker}}\left({#1}\right)}}
\def\@enum@{\list{\csname label\@enumctr\endcsname}%
           {\usecounter{\@enumctr}\def\makelabel##1{
\normalfont\ignorespaces\emph{{##1}~}}
\setlength{\labelsep}{3pt}
\setlength{\parsep}{0pt}
\setlength{\itemsep}{0pt}
\setlength{\leftmargin}{0pt}
\setlength{\labelwidth}{0pt}
\setlength{\listparindent}{\parindent}%
\setlength{\itemsep}{0pt}
\setlength{\itemindent}{0pt}
\setlength{\topsep}{3pt plus 1pt minus 1 pt}}}
\renewcommand\theenumi{\@alph\c@enumi}
\renewcommand\theenumii{\@alph\c@enumii}
\renewcommand\theenumiii{\@alph\c@enumiii}
\renewcommand\theenumiv{\@alph\c@enumiv}
\renewcommand{\p@enumii}{}
\newcommand{\id}{\ensuremath{\operatorname{\text{Id}}}}
\newcommand{\lhra}{\mathrel{\lhook\joinrel\to}}
\newcommand{\splitmap}[3]{\operatorname{\text{Spl}}(#1,#2,#3)}
\DeclareRobustCommand*{\up}[1]{\textsuperscript{#1}}
\renewcommand{\th}{\ensuremath{\up{th}}}
\newcommand{\ft}[1][n]{\ensuremath{\Delta_{#1}^{2}}}
\newcommand{\brak}[1]{\ensuremath{\left\{ #1 \right\}}}
\newcommand{\ang}[1]{\ensuremath{\left\langle #1\right\rangle}}
\newcommand{\set}[2]{\ensuremath{\brak{#1 \,\mid\, #2}}}
\newcommand{\setr}[2]{\ensuremath{\brak{#1 \,\left\lvert \, #2 \right.}}}
\newcommand{\setl}[2]{\ensuremath{\brak{\left. #1 \,\right\rvert \, #2}}}
\newtheoremstyle{theoremm}{}{}{\itshape}{}{\scshape}{.}{ }{}
\theoremstyle{theoremm}
\newtheorem{thm}{Theorem}
\newtheorem{lem}[thm]{Lemma}
\newtheorem{prop}[thm]{Proposition}
\newtheorem{cor}[thm]{Corollary}
\newtheoremstyle{remarkk}{}{}{}{}{\scshape}{.}{ }{}
\theoremstyle{remarkk}
\newtheorem{rem}[thm]{Remark}
\newtheorem{rems}[thm]{Remarks}
\newtheoremstyle{citing}
  {}
  {}
  {\itshape}
  {}
  {\scshape}
  {.}
  {.5em}
  {\thmnote{#3}}
\theoremstyle{citing}
\newcommand{\reth}[1]{Theorem~\protect\ref{th:#1}}
\newcommand{\relem}[1]{Lemma~\protect\ref{lem:#1}}
\newcommand{\repr}[1]{Proposition~\protect\ref{prop:#1}}
\newcommand{\reco}[1]{Corollary~\protect\ref{cor:#1}}
\newcommand{\resec}[1]{Section~\protect\ref{sec:#1}}
\newcommand{\req}[1]{equation~(\protect\ref{eq:#1})}
\newcommand{\reqref}[1]{(\protect\ref{eq:#1})}
\newcommand{\rerem}[1]{Remark~\protect\ref{rem:#1}}
\newcommand{\ai}{\vbox to 7pt{\hbox to 7pt{\vrule height 7pt width 7pt}}}
\newcommand{\comj}[1]{\noindent\textbf{!!!J!!!~#1}}
\begin{document}

\title{Fixed points of $n$-valued maps on surfaces and the Wecken property~--~a configuration space approach}

\author{DACIBERG~LIMA~GON\c{C}ALVES\\
Departamento de Matem\'atica - IME - Universidade de S\~ao Paulo,\\
Rua do Mat\~ao, 1010 \\
 CEP 05508-090 - 
S\~ao Paulo - SP - Brazil.\\
e-mail:~\url{dlgoncal@ime.usp.br}\vspace*{4mm}\\
JOHN~GUASCHI\\
Normandie Universit\'e, UNICAEN,\\
Laboratoire de Math\'ematiques Nicolas Oresme UMR CNRS~\textup{6139},\\
CS 14032, 14032 Caen Cedex 5, France.\\
e-mail:~\url{john.guaschi@unicaen.fr}}


\date{13th February 2017, revised 16th March and 11th April.}

\begingroup
\renewcommand{\thefootnote}{}
\footnotetext{MSC 2010: 55M20; 54C60; 20F36; 57M10; 55R80}
\footnotetext{Keywords: multivalued maps;  fixed points; Wecken property;  Nielsen numbers; braids;
configuration space.}
\endgroup 

\maketitle

\begin{abstract}
\noindent 
\emph{In this paper, we explore the fixed point theory of $n$-valued maps using configuration spaces and braid groups, focussing on two fundamental problems, the Wecken property, and the computation of the Nielsen number. We show that the projective plane (resp.\ the $2$-sphere $\St$) has the Wecken property for $n$-valued maps for all $n\in \N$ (resp.\ all $n\geq 3$). In the case $n=2$ and $\St$, we prove a partial result about the Wecken property. We then describe the Nielsen number of a non-split $n$-valued map $\phi\colon\thinspace X \multimap X$ of an orientable, compact manifold without boundary in terms of the Nielsen coincidence numbers of a certain finite covering $q\colon\thinspace \widehat{X} \to X$ with a subset of the coordinate maps of a lift of the $n$-valued split map $\phi\circ q\colon\thinspace \widehat{X} \multimap X$.} 
\end{abstract}

%
%
%
%

\section{Introduction}\label{sec:intro}

Multifunctions and their fixed point theory have been widely studied, see the books~\cite{Be,Gor} for example, where fairly general classes of multifunctions and spaces are considered. In all of what follows, $X$ and $Y$ will be topological spaces. Let $\phi\colon\thinspace X \multimap Y$ be an $n$-valued function \emph{i.e.}\ a function that to each $x\in X$ associates an unordered subset $\phi(x)$ of $Y$ of cardinality $n$. Recall that such an $n$-valued function $\phi$ is \emph{continuous} if for all $x\in X$, $\phi(x)$ is closed, and for any open set $V$ in $Y$, the sets $\set{x\in X}{\phi(x)\subset V}$ and $\set{x \in  X}{\phi(x)\cap V \neq  \varnothing}$ are open in $X$. We will refer to a continuous $n$-valued function as an \emph{$n$-valued map}.  The class of $n$-valued maps is of particular interest, and more information about their fixed point theory on finite complexes may be found in~\cite{Bet1,Bet2,Brr1,Brr2,Brr3,Brr4,Brr5,Brr6,Sch0,Sch1,Sch2}. A \emph{homotopy} between two $n$-valued maps $\phi_1,\phi_2\colon\thinspace X \multimap Y$ is an $n$-valued map $H\colon\thinspace  X\times I \multimap Y$ such that $\phi_1=H ( \cdot , 0)$ and  $\phi_2=H ( \cdot , 1)$. Following~\cite{Sch0}, an $n$-valued function $\phi\colon\thinspace  X \multimap Y$ is said to be a \emph{split $n$-valued map} if there exist single-valued maps $f_1, f_2, \ldots, f_n\colon\thinspace X \to Y$ such that $\phi(x)=\brak{f_1(x),\ldots,f_n(x)}$ for all $x\in X$. This being the case, we shall write $\phi=\brak{f_1,\ldots,f_n}$. Let $\splitmap{X}{Y}{n}$ denote the set of split $n$-valued maps between $X$ and $Y$. \emph{A priori}, $\phi\colon\thinspace X \multimap Y$ is just an $n$-valued function, but if it is split then it is continuous by \cite[Proposition~42]{GG15}, which justifies the use of the word `map' in the definition. Partly for this reason, split $n$-valued maps play an important r\^ole in the theory. If $\phi\colon\thinspace X \multimap X$ is an $n$-valued map from $X$ to itself, we say that $x\in X$ is a \emph{fixed point} of $\phi$ if $x\in \phi(x)$, and we denote the set of fixed points of $\phi$ by $\operatorname{\text{Fix}}(\phi)$.

In~\cite[Section~5]{Sch1}, Schirmer defined the notion of Nielsen number of $n$-valued maps of finite complexes. Her definition is similar to that for single-valued maps, although it is a little more elaborate. As for the case of single-valued maps, for appropriate spaces, the Nielsen number $N(\phi)$ of an $n$-valued map $\phi\colon\thinspace X \multimap X$ provides a lower bound for the number of fixed points among all $n$-valued maps homotopic to $\phi$. The computation of the Nielsen number of a self- or $n$-valued map is an important problem in fixed point theory, and is not easy in general. In the split case, we have the following formula for the Nielsen number of $n$-valued maps of polyhedra in terms of the constituent single-valued maps.
\begin{thm}\label{th:helgath0}\cite[Corollary~7.2]{Sch1}
Let $\phi=\brak{f_1,f_2,\ldots, f_n}\colon\thinspace  X \multimap X$ be a split $n$-valued map, where $X$ is a compact polyhedron. Then $N(\phi)=N(f_1)+\cdots+N(f_n)$.
\end{thm}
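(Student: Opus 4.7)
The plan is to reduce the Nielsen number of $\phi$ to those of the constituent single-valued maps by identifying the fixed point classes of $\phi$ with those of the $f_i$ and showing that the corresponding indices agree. The starting point is the observation that because $\phi(x) = \brak{f_1(x), \ldots, f_n(x)}$ has cardinality exactly $n$ for every $x \in X$, the maps $f_1, \ldots, f_n$ are pairwise coincidence-free: $f_i(x) \neq f_j(x)$ for all $x \in X$ and all $i \neq j$. As a consequence, a point $x$ is a fixed point of $\phi$ if and only if $x = f_i(x)$ for \emph{exactly} one index $i$, so that $\operatorname{\text{Fix}}(\phi) = \bigsqcup_{i=1}^{n} \operatorname{\text{Fix}}(f_i)$ as a disjoint union.

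Next I would analyse Schirmer's Nielsen equivalence in this split setting. Two fixed points $x, y$ of $\phi$ are declared equivalent if there is a path $\alpha \colon\thinspace I \to X$ from $x$ to $y$ together with a continuous selection $s \colon\thinspace I \to X$ with $s(t) \in \phi(\alpha(t))$ and $s(0) = x$, $s(1) = y$, such that $\alpha$ and $s$ are homotopic rel endpoints through such selection pairs. Since the graphs of the $f_i$ in $X \times X$ are pairwise disjoint (by the coincidence-free property), any continuous selection from $\phi \circ \alpha$ must stay with a single branch, i.e.\ $s(t) = f_i(\alpha(t))$ for all $t$ and some fixed $i$. Hence equivalence forces $x, y \in \operatorname{\text{Fix}}(f_i)$ for a common $i$, and the condition reduces to $\alpha \simeq f_i \circ \alpha$ rel endpoints, which is precisely Nielsen equivalence for $f_i$. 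Thus the set of Nielsen classes of $\phi$ is exactly the disjoint union of the Nielsen classes of $f_1, \ldots, f_n$.

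Third, I would compare fixed point indices. If $F$ is a Nielsen class of $\phi$ contained in $\operatorname{\text{Fix}}(f_i)$, then near $F$ the graph of $\phi$ in $X \times X$ is the disjoint union of the graphs of $f_1, \ldots, f_n$, and only the graph of $f_i$ meets the diagonal in a neighbourhood of $F$. The local fixed point index of $F$ with respect to $\phi$, as defined by Schirmer, therefore coincides with the classical fixed point index of $F$ with respect to $f_i$. In particular, $F$ is essential for $\phi$ if and only if it is essential for $f_i$. Summing over essential classes yields $N(\phi) = \sum_{i=1}^{n} N(f_i)$.

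The main obstacle is the middle step: checking that Nielsen equivalence for $\phi$ does not permit the selection path to ``jump'' between different branches $f_i$ and $f_j$. This is the point at which the cardinality hypothesis on $\phi$, and hence the coincidence-free property of the family $\brak{f_1, \ldots, f_n}$, is essential; once this is established, both the class decomposition and the index computation follow in a formal way.
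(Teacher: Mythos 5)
Your argument is correct, but note that the paper does not prove this statement at all: it is imported verbatim as Schirmer's Corollary~7.2 of~\cite{Sch1}, so there is no internal proof to compare against. What you have written is essentially Schirmer's own mechanism, and it is consistent with the way the paper later recalls her definitions in \resec{calcu}: the disjoint decomposition $\operatorname{\text{Fix}}(\phi)=\bigsqcup_{i}\operatorname{\text{Fix}}(f_i)$ follows from the coincidence-free property of the $f_i$; a continuous selection $s$ of $\phi\circ\alpha$ over the connected domain $I$ cannot jump branches because the sets $\setl{t\in I}{s(t)=f_i(\alpha(t))}$ are closed, pairwise disjoint and cover $I$, so Nielsen equivalence for $\phi$ restricts to Nielsen equivalence for a single $f_i$; and Schirmer's local index of a fixed point lying in $\operatorname{\text{Fix}}(f_i)$ is by definition $\operatorname{\text{Ind}}(f_i,\cdot)$, so essential classes match up and the counts add. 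Two small points of care, neither of which affects validity: Schirmer's equivalence only requires the selection path $g_j$ to be homotopic to $\lambda$ rel endpoints as paths in $X$ (not through selection pairs, as your phrasing suggests), which is in fact the weaker condition your reduction uses; and the index comparison should be preceded by a reduction to the fix-finite case via Schirmer's approximation theorem and homotopy invariance, exactly as the paper does when it recalls the definition of the index.
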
 

A second fundamental problem in fixed point theory is to decide whether a space $X$ has the Wecken property. Recall that the homotopy class of an $n$-valued map $\phi\colon\thinspace X \multimap X$ is said to have the \emph{Wecken property} if there exists an $n$-valued map $\psi\colon\thinspace X \multimap X$ homotopic to $\phi$ that has exactly $N(\phi)$ fixed points, and that a space $X$ has the \emph{Wecken property for $n$-valued maps} if every homotopy class of $n$-valued maps of $X$ has the Wecken property.
%
%
%
For single-valued maps, many complexes of dimension at least three have the Wecken property~\cite{Ke,Wec1,Wec2,Wec3}. In the case of surfaces, the $2$-sphere $\St$, the real projective plane $\rp$ have the Wecken property~\cite{BGZ,Ji}, as do the $2$-torus $\T$ and the Klein bottle (see~\cite{GK} for example).  However, Jiang showed that no other compact surface without boundary has the Wecken property~\cite{Ji0,Ji1}. For $n$-valued maps,   substantial progress has been made in the study of the Wecken property. The following result mirrors that for single-valued maps.
\begin{thm}\label{th:helgath1}\cite[Theorem~5.2]{Sch2}
Let $M$ be a compact triangulable manifold (with or without boundary) of dimension greater than or equal to $3$, and let $n\in \N$. Then every $n$-valued map $\phi \colon\thinspace M \multimap M$ is homotopic to an $n$-valued map that has $N(\phi)$ fixed points. In particular, $M$ has the Wecken property for $n$-valued maps.
\end{thm}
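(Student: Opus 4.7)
My plan is to reduce the statement to the classical Wecken theorem for single-valued self-maps of manifolds of dimension at least three, exploiting the high dimension to carry out the required homotopies on one branch of $\phi$ at a time without disturbing the other $n-1$ branches.

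First I would put $\phi$ into general position. Every $n$-valued map splits locally into $n$ single-valued branches, so after a preliminary transversality-type homotopy, which is available because $\dim M \geq 3$, I may assume that $\operatorname{\text{Fix}}(\phi)$ is a finite set and that each $x \in \operatorname{\text{Fix}}(\phi)$ lies on exactly one local branch of $\phi$. With this normalisation, the Nielsen classes and local indices in the sense of~\cite[Section~5]{Sch1} agree with those one computes from the single-valued local branches.

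Next I would reduce class by class. If $x_1,x_2\in\operatorname{\text{Fix}}(\phi)$ lie in the same inessential Nielsen class, classical Nielsen theory produces a path $\gamma$ in $M$ from $x_1$ to $x_2$, together with a lift $\widetilde{\gamma}$ along the branch of $\phi$ containing these two fixed points, such that $\gamma$ and $\widetilde{\gamma}$ are homotopic rel endpoints. I would then thicken $\gamma$ to a tubular neighbourhood $U$ chosen to be disjoint from the remaining fixed points of $\phi$ and from the images of the other $n-1$ local branches over $U$; such a $U$ exists because $\dim M \geq 3$. Over $U$ I would modify only the chosen branch via the classical Wecken cancellation, leaving the other branches untouched, yielding an $n$-valued map homotopic to $\phi$ with two fewer fixed points. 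An analogous, slightly more delicate, move coalesces the fixed points within an essential class into a single point of the prescribed index.

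The main obstacle, and the place where the dimension hypothesis does the work, is keeping the $n$-valued structure intact during the cancellation. One must ensure simultaneously that the moving branch never collides with the remaining $n-1$ branches (so that $\phi$ stays an $n$-valued map throughout the homotopy) and that no spurious fixed points are created as the moving branch sweeps past the diagonal. Both are codimension-at-least-two conditions on $M$, and since the cancelling perturbation is parametrised by the $1$-dimensional path $\gamma$, a generic choice of $\gamma$ avoids these loci whenever $\dim M \geq 3$. Iterating this construction over all Nielsen classes produces an $n$-valued map homotopic to $\phi$ with exactly $N(\phi)$ fixed points, establishing the Wecken property for $n$-valued maps on $M$.
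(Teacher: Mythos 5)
First, a point of reference: the paper does not prove this statement at all; it is quoted as Schirmer's minimum theorem \cite[Theorem~5.2]{Sch2}, so there is no internal proof to compare against. Your outline is broadly in the spirit of Schirmer's argument (fix-finite approximation, local splitting into branches, branchwise Wecken cancellation), but it has a genuine gap at precisely the point where all the difficulty of the multivalued case is concentrated.

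The gap is the dimension count in your last paragraph. Keeping the deformation inside the class of $n$-valued maps means that throughout the cancellation homotopy $f_1^t$ of the chosen branch one must have $f_1^t(x)\neq f_j(x)$ for every $x$ in the supporting neighbourhood $U$, every $t\in I$ and every $j\geq 2$. This is a coincidence condition between maps into $M$, hence of codimension $\dim M$, not codimension two, and it is imposed over the parameter space of the homotopy, which is the $(\dim M+1)$-dimensional set $U\times I$, not over the $1$-dimensional path $\gamma$. Naive transversality therefore predicts a \emph{one-dimensional} collision locus, so a generic choice of $\gamma$ does not make it empty; this is exactly the reason coincidences of equidimensional maps cannot be removed by general position and why Nielsen theory is needed in the first place. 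The statement is still true, but the avoidance argument has to be localised correctly: the object that must miss the other branches is the two-dimensional track $h(s,t)$ of the homotopy $f_1\circ\gamma\simeq\gamma$, compared against $f_j\circ\gamma$, i.e.\ a map of a square into $M\times M$ that must avoid the diagonal. Since $2<\dim M$, a generic choice of the homotopy $h$ (not of $\gamma$) achieves this, and one then shrinks the tubular neighbourhood of $\gamma$ until, by compactness and the resulting uniform separation, the deformed branch stays clear of the other branches over the whole tube. Your phrase ``disjoint from the images of the other $n-1$ local branches over $U$'' is not a condition one can impose on $U$ as stated and must be replaced by this controlled-support argument. The remaining ingredients (fix-finiteness, which is Schirmer's earlier approximation theorem and needs no dimension hypothesis; coalescing an essential class to a single point of the correct index; removing inessential classes) are standard once this is repaired.
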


\reth{helgath1} has been extended to a larger class of spaces, including some $2$-dimensional complexes, as follows.
 
\begin{thm}\label{th:bete}\cite[Theorem~1]{Bet2}
Let $X$ be a compact polyhedron without local cut points and such that no connected component of $X$ is a surface.
Then $X$ has the Wecken property for $n$-valued maps.
\end{thm}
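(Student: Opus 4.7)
The plan is to adapt the classical Wecken-type proof for single-valued maps on polyhedra without local cut points (going back to Jiang and Wecken) to the $n$-valued setting, exploiting the fact that every $n$-valued map is locally split. Let $\phi\colon\thinspace X \multimap X$ be an $n$-valued map; the goal is to produce a homotopy from $\phi$ to an $n$-valued map $\psi$ having exactly $N(\phi)$ fixed points.

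First, I would put $\phi$ into general position. Since an $n$-valued map splits as an unordered tuple $\brak{f_{1}, \ldots, f_{n}}$ of continuous maps on a sufficiently small neighbourhood of any point, standard transversality arguments applied branch-by-branch yield a representative $\phi'$ in the homotopy class of $\phi$ with only finitely many, isolated fixed points, each carrying a well-defined local index. I would then group these fixed points into Nielsen classes in the sense of Schirmer, so that $N(\phi')$ equals the number of essential classes.

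The heart of the argument is a local cancellation move. Given two fixed points $x_{1}, x_{2}$ in the same Nielsen class whose local indices sum to zero, one joins them by a path $\gamma$ that avoids the other fixed points and for which a suitable continuous selection of $\phi' \circ \gamma$ is path-homotopic to $\gamma$. The hypothesis that $X$ has no local cut points is precisely what guarantees the existence of such a $\gamma$: any potential obstacle along a proposed route can be circumvented without locally disconnecting the polyhedron. On a tubular neighbourhood of $\gamma$, the map $\phi'$ splits, and $x_{1}, x_{2}$ lie on the graph of a single branch; one then applies the classical Whitney-style cancellation to that branch and extends the resulting local homotopy trivially on the remaining branches, producing an $n$-valued homotopy that deletes both fixed points without creating new ones.

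The main obstacle is verifying that this local cancellation extends consistently in the $n$-valued setting without inducing collisions between branches inside the tubular neighbourhood. This is where both hypotheses on $X$ are crucial: the absence of local cut points provides the room needed to transport fixed points along paths, while excluding surface components rules out exactly the cases where Jiang's braid-theoretic obstructions are known to appear, leaving either higher-dimensional polyhedra or non-manifold $2$-dimensional polyhedra for which general-position techniques give enough room to keep branches disjoint throughout the homotopy. Iterating these cancellations until each essential Nielsen class is represented by a single fixed point yields the desired $\psi$, which will then realise the lower bound $N(\phi)$ and establish the Wecken property.
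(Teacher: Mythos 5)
This statement is quoted verbatim from Better's paper (\cite[Theorem~1]{Bet2}); the present paper contains no proof of it, so there is nothing internal to compare your argument against. Judged on its own terms, your outline does follow the general strategy of the published proofs (Schirmer's minimum theorem in dimension $\geq 3$ and Better's extension to $2$-complexes): fix-finite approximation, grouping into Nielsen classes, and Whitney-type cancellation of index-cancelling pairs along paths. But two steps are asserted where the real work lies, and as stated they do not go through.

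First, the claim that the local cancellation ``extends trivially on the remaining branches'' and that general position keeps the branches disjoint is not correct as an argument. A homotopy of $n$-valued maps is a map into $D_n(X)$, so throughout the deformation the $n$ branches must remain pointwise distinct. The coincidence set of two self-maps of a $k$-dimensional polyhedron is generically $0$-dimensional, not empty, so you cannot invoke transversality to conclude that deforming one branch $f_j$ over a tubular neighbourhood of $\gamma$ avoids the other branches: the deformed values of $f_j$ sweep out a region containing $\gamma$ itself (since the selection of $\phi'\circ\gamma$ is path-homotopic to $\gamma$), and nothing in your argument prevents some $f_i$, $i\neq j$, from meeting that region. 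Controlling this is precisely the point of Schirmer's and Better's constructions, where the cancelling homotopy is built so that its track stays inside a neighbourhood on which the splitting persists and the other branches are bounded away from it; this requires an explicit estimate, not genericity.

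Second, the treatment of the two-dimensional case is too quick. Excluding surface components does not reduce the problem to one where ``general-position techniques give enough room'': a non-surface $2$-complex without local cut points is still $2$-dimensional, and the Whitney trick is unavailable. The actual mechanism is that at such points the complex fails to be locally Euclidean (e.g.\ three or more $2$-cells meet along an edge), and it is this extra local branching --- not transversality --- that furnishes the room to slide a fixed point off one sheet and cancel it against its partner. Without an argument exploiting that local structure, the $2$-dimensional case of the theorem is not established by your sketch.
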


In a recent paper~\cite{GG15}, we studied some aspects of fixed point theory of $n$-valued maps from $X$ to $Y$ by introducing an equivalent and natural formulation in terms of single-valued maps from $X$ to the $n\up{th}$ unordered configuration space $D_n(Y)$ of $X$, where $D_n(Y)$ is the quotient of the $n\th$ (ordered) configuration space $F_{n}(X)$ of $X$, defined by:
\begin{equation*}
F_n(Y)=\setr{(y_1,\ldots,y_n)}{\text{$y_i\in Y$, and $y_i\neq y_j$ if $i\neq j$}},
\end{equation*}
by the free action of the symmetric group $S_{n}$ given by permuting coordinates. It is well known that $\pi_{1}(D_{n}(Y))$ (resp.\ $\pi_{1}(F_{n}(Y))$) is the braid group $B_{n}(Y)$ (resp.\ pure braid group $P_{n}(Y)$) of $Y$ on $n$ strings, that the quotient map $\pi\colon\thinspace F_{n}(Y) \to D_{n}(Y)$ is a regular $n!$-fold covering, and that $P_{n}(Y)$ is the kernel of the surjective homomorphism $\tau\colon\thinspace B_n(Y) \to S_n$ that to a braid associates its induced permutation. Configuration spaces play an important r\^ole in several branches of mathematics and have been extensively studied, see~\cite{CG,FH} for example. As in~\cite{GG15}, a map $\Phi\colon\thinspace X \to D_n(Y)$ will be called an \emph{$n$-unordered map}, and a map $\Psi\colon\thinspace X \to F_n(Y)$ will be called an \emph{$n$-ordered map}. For such an $n$-ordered map, for $i=1,\ldots,n$, there exist maps $f_i\colon\thinspace X \to Y$ such that $\Psi(x)=(f_1(x),\ldots, f_n(x))$ for all $x\in X$, and for which $f_i(x)\neq f_j(x)$ for all $1\leq i,j\leq n$, $i\neq j$, and all $x\in X$. In this case, we will often write $\Psi=(f_{1},\ldots,f_{n})$. There is an obvious bijection between the set of $n$-point subsets of $Y$ and $D_{n}(Y)$ that induces a bijection between the set of $n$-valued functions from $X$ to $Y$ and the set of functions from $X$ to $D_{n}(Y)$. If we suppose in addition that $X$ and $Y$ are metric spaces, then $D_{n}(Y)$ may be equipped with a certain Hausdorff metric~\cite[Appendix]{GG15}, in which case this bijection restricts to a bijection between the set of $n$-valued maps from $X$ to $Y$ and the set of (continuous) maps from $X$ to $D_{n}(Y)$~\cite[Theorem~8]{GG15}. If $\phi\colon\thinspace X \multimap Y$ is an $n$-valued map then we shall say that the map $\Phi\colon\thinspace X \to D_{n}(Y)$ obtained by this bijection as the map associated to $\phi$. If a map $\Phi\colon\thinspace  X \to D_n(Y)$ admits a lift $\widehat{\Phi}\colon\thinspace  X \to F_n(Y)$ via the covering map $\pi$ then we say that $\widehat{\Phi}$ is a \emph{lift} of $\phi$. By~\cite[Section~2.1]{GG15}, $\phi$ is split if and only if it admits a lift.

In~\cite{GG15}, we showed that spheres and real and complex projective spaces of even dimension have the fixed point property for $n$-valued maps, and we expressed the problem of deforming an $n$-valued map to a fixed point free $n$-valued map in terms of an algebraic criterion involving the braid groups of $X$, which we then used to determine an infinite number of homotopy classes of split $2$-valued maps of $\T$ that contain a fixed point free representative. In this paper, we explore some other aspects of fixed point theory of $n$-valued maps using the above formulation in terms of configuration spaces. We establish an equality for the Nielsen number in the non-split case, and we study the Wecken property for some surfaces.
We now describe the contents of this paper in more detail. 
 
In \resec{pres}, we recall some fundamental results from~\cite{GG15}, and in \resec{wecdisc}, we show that the $2$-disc has the Wecken property for $n$-valued maps for all $n\in \N$. In Sections~\ref{sec:nvalS2} and~\ref{sec:proj}, we analyse the Wecken problem for $n$-valued maps of $\St$ and $\rp$ respectively. In the first case, one important step is to determine the number of homotopy classes of $n$-valued maps of $\St$. In \relem{sphWec}, we show that there is just one such class if $n\geq 3$, while if $n=2$, we show that the set of such homotopy classes is in bijection with $\N$. If $\phi\colon\thinspace \St \multimap \St$ is a $2$-valued map then we refer to the non-negative integer given by this correspondence as the \emph{degree} of $\phi$ (or its homotopy class). Let $A\colon\thinspace \St \to \St$ denote the antipodal map. In the case of $\rp$, in \repr{classmap}, we show that there are precisely two homotopy classes of $n$-valued maps of $\rp$ for all $n\geq 2$, and we give explicit representatives of these homotopy classes. To prove the Wecken property for $n$-valued maps of $\rp$, it then suffices to check that each of these homotopy classes has the Wecken property. We summarise the main results of Sections~\ref{sec:pres} and~\ref{sec:proj} as follows. 

\begin{prop}\label{prop:propscst}\mbox{} 
\begin{enumerate}[(a)]
\item\label{it:propscsta} Let $f_0\colon\thinspace\St \to \St$ be the constant map at a point $x_{0}\in \St$. Then $f_{0}$ has degree $0$, and $f_{0}$ and $A\circ f_0$ each have precisely one  fixed point.

\item\label{it:propscstb} There exists a map $ f_1\colon\thinspace \St \to \St$ of degree $1$ that has a single fixed point and such that
the map $A\circ f_1$ is fixed point free.
\item\label{it:propscstc}  There exists a map $ f_2\colon\thinspace \St \to \St$ of degree $2$ such that $f_{2}$ and the map $A\circ f_2$ each possess a single fixed point.
\item \label{it:propscstd}
The Wecken property holds for:
\begin{enumerate}[(i)]
\item\label{it:propscstb1} $n$-valued maps of $\St$ for all $n\geq 3$. 
\item\label{it:propscstb2} the homotopy classes of $2$-valued maps of degree $0$, $1$ or $2$. 
\end{enumerate}
\end{enumerate}
\end{prop}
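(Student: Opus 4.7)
The plan is to handle parts~(a)--(c) by explicit constructions via stereographic projection, and then to deduce the Wecken statements in~(d) by assembling split $n$-valued maps whose constituents are the single-valued maps of~(a)--(c), so that \reth{helgath0} can be applied. Throughout, I identify $\St$ with $\C\cup\{\infty\}$, so that the antipodal map becomes $A(z)=-1/\bar z$.

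Part~(a) is essentially tautological: the constant map $f_{0}$ at $x_{0}$ has degree~$0$ and unique fixed point $x_{0}$, and $A\circ f_{0}$ is constant at $-x_{0}$ with unique fixed point $-x_{0}$. For~(b) I take $f_{1}(z)=z+1$, a M\"obius transformation of degree~$1$ whose only fixed point is $\infty$; the equation $A(f_{1}(z))=z$ rearranges to $|z|^{2}+z+1=0$, and taking the imaginary part forces $z$ to be real with $z^{2}+z+1=0$, which has no real solution, so $A\circ f_{1}$ is fixed-point-free. For~(c) I take $f_{2}(z)=z+1/z$, a rational map of topological degree~$2$; the equation $f_{2}(z)=z$ reduces to $1/z=0$, so the only fixed point of $f_{2}$ is $\infty$, while $A\circ f_{2}(z)=z$ becomes $\bar z(z^{2}+1)=-z$, and expanding in real and imaginary parts yields $z=0$ as the unique solution.

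For~(d)(i), \relem{sphWec} shows that when $n\geq 3$ there is a single homotopy class of $n$-valued maps of $\St$, so it suffices to exhibit one representative realising the Wecken property. I choose $n$ distinct points $x_{1},\dots,x_{n}\in\St$ and take the split constant $n$-valued map $\phi=\{c_{x_{1}},\dots,c_{x_{n}}\}$; its fixed-point set is precisely $\{x_{1},\dots,x_{n}\}$, and since each constant map has Nielsen number~$1$, \reth{helgath0} gives $N(\phi)=n$, which yields the Wecken property.

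For~(d)(ii), the degree-zero class is handled by the constant $2$-valued map $\{c_{x_{1}},c_{x_{2}}\}$ as above. For degrees~$1$ and~$2$ I take the split $2$-valued maps $\phi_{i}=\{f_{i},A\circ f_{i}\}$ for $i=1,2$, which are genuinely split since no point of $\St$ coincides with its antipode. From $L(f)=1+\deg(f)$ together with the simple-connectedness of $\St$, one has $N(f)=1$ when $\deg(f)\neq -1$ and $N(f)=0$ otherwise, so \reth{helgath0} gives $N(\phi_{1})=1+0=1$ and $N(\phi_{2})=1+1=2$, matching the fixed-point counts from~(b) and~(c). The main step still to justify, and the one I anticipate as the chief obstacle, is the identification of $\phi_{1}$ and $\phi_{2}$ with the homotopy classes of $2$-valued maps of $\St$ of degree~$1$ and~$2$ respectively; I would carry this out by analysing the lifts $(f_{i},A\circ f_{i})\colon\thinspace\St\to F_{2}(\St)$ and invoking the explicit bijection with $\N$ provided by \relem{sphWec}.
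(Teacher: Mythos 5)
Your proof is correct and follows the same overall skeleton as the paper's: parts~(a) and~(d) are argued identically (one homotopy class for $n\geq 3$ realised by the constant $n$-valued map; for degrees $1,2$ the split maps $\phi_i=\brak{f_i,A\circ f_i}$ with $N(\phi_i)=N(f_i)+N(A\circ f_i)=i$ computed from Lefschetz numbers and \reth{helgath0}). Where you genuinely diverge is in the constructions of $f_1$ and $f_2$. The paper obtains $f_1$ as a small deformation of the identity coming from a vector field on $\St$ with a single singular point (so that $\lvert x-f_1(x)\rvert<\pi/2$ forces $f_1(x)\neq -x$ everywhere), and obtains $f_2$ as the reduced suspension of $z\mapsto z^2$ on $\St[1]$, locating the unique fixed points of $f_2$ and $A\circ f_2$ by hand in the suspension coordinates. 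Your models $f_1(z)=z+1$ and $f_2(z)=z+1/z$ on $\C\cup\brak{\infty}$ are completely explicit, and the fixed-point equations for $A\circ f_i$ reduce to elementary algebra (your computations check out, including the point at infinity); this is arguably cleaner, at the cost of being coordinate-bound where the paper's argument is geometric. The one step you defer --- that $\brak{f_i,A\circ f_i}$ represents the degree-$i$ class of $2$-valued maps --- is also treated as immediate in the paper; it does follow routinely from the proof of \relem{sphWec}, since projection onto the first coordinate of the lift $(f_i,A\circ f_i)$ is a homotopy inverse of the equivariant equivalence $x\longmapsto (x,-x)$ used there, so the class of $\phi_i$ in $\pi_2(D_2(\St))/\pi_1\cong\N$ is $\lvert \deg f_i\rvert=i$. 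So there is no gap of substance, only a verification you flagged and should write out.
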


\repr{propscst}(\ref{it:propscstd})(\ref{it:propscstb2}) has been obtained independently in~\cite{Brr0}.
 Our proof is different from that given  in~\cite{Brr0}, and is perhaps more direct. The paper~\cite{Brr0} contains other results about $n$-valued
 maps of $\St$, concerning for example the minimal number of fixed points in a given homotopy class of a $2$-valued map (see~\cite[Theorem 4.1]{Brr0}). Apart from the three cases given in~\repr{propscst}(\ref{it:propscstd})(\ref{it:propscstb2}), we believe that this minimal number is not known. We do not know either whether the Wecken property holds for $2$-valued maps  of $\St$ of degree greater than or equal to $3$. 


\begin{thm} \label{th:wecproj}
The projective plane $\rp$ has the Wecken property for $n$-valued maps for all $n\in \N$.
\end{thm}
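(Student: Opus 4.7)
The plan is to reduce the problem to verifying the Wecken property on explicit representatives of the (few) homotopy classes of $n$-valued maps of $\rp$. For $n=1$, the assertion is just the single-valued Wecken property of $\rp$, which is known by~\cite{BGZ,Ji}. For $n\geq 2$, I would invoke \repr{classmap} (proved earlier in the paper), which tells us that there are precisely \emph{two} homotopy classes of $n$-valued maps of $\rp$ and gives explicit representatives of each. Since the Wecken property is by definition a property of homotopy classes, it suffices to establish it for one representative in each of these two classes.

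For the split class, I would take as representative a split $n$-valued map $\phi=\brak{f_{1},\ldots,f_{n}}$ whose constituents $f_{i}\colon\thinspace\rp\to\rp$ are single-valued maps provided by the explicit description of \repr{classmap} (in the simplest model, $n$ distinct constant maps; more generally, maps whose homotopy types match the prescribed representative). By \reth{helgath0} one has $N(\phi)=N(f_{1})+\cdots+N(f_{n})$. Each $f_{i}$ can be deformed, using the single-valued Wecken property of $\rp$, to a map $f_{i}'$ with exactly $N(f_{i})$ fixed points; moreover, after small perturbations in $\rp$ the fixed-point sets of the $f_{i}'$ can be made pairwise disjoint and, crucially, the graphs of the $f_{i}'$ can be kept pairwise disjoint on all of $\rp$ so that $\brak{f_{1}',\ldots,f_{n}'}$ remains a well-defined split $n$-valued map homotopic to $\phi$. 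That map then has exactly $N(\phi)$ fixed points. The verification that the disjointness can be arranged without destroying the $n$-valued structure is routine in dimension~$2$, since generic perturbations cut down intersection loci of the graphs to empty sets.

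For the non-split class, the approach is to work directly with the explicit representative from \repr{classmap}. I would first pass to the $n$-ordered map $\widehat{\Phi}\colon\thinspace\St\to F_{n}(\rp)$ obtained by pulling the associated map $\Phi\colon\thinspace\rp\to D_{n}(\rp)$ back along the double cover $p\colon\thinspace\St\to\rp$; such a lift exists because $p_{*}\pi_{1}(\St)=\brak{1}$ lies in $\ker(\Phi_{*})$. Writing $\widehat{\Phi}=(\widehat{f}_{1},\ldots,\widehat{f}_{n})$ with the deck involution of $p$ permuting the coordinates, one can use the $\Z/2$-equivariant structure to identify fixed points of $\phi$ with coincidence points downstairs. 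The single-valued $\widehat{f}_{i}$ can be deformed (equivariantly, by two steps: first deform each pair on a fundamental domain of $\St$, then extend equivariantly) to minimise coincidences with the identity, and the count matches $N(\phi)$ by the adaptation of \reth{helgath0} to this setting together with standard Nielsen theory for single-valued maps of $\St$.

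The main obstacle is the non-split case, specifically ensuring that the equivariant deformation of $\widehat{\Phi}$ realises the Nielsen number $N(\phi)$ without introducing extra fixed points coming from the equivariance. The technical work lies in showing that the explicit representative of \repr{classmap} can be chosen to have fixed-point set that projects bijectively from $\St$ to $\rp$, and that small-support equivariant isotopies in $F_{n}(\rp)$ suffice to eliminate inessential classes. Once this is accomplished, combining the two cases with the dichotomy from \repr{classmap} yields the Wecken property for every $n\in\N$.
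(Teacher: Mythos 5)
Your reduction to the two homotopy classes of \repr{classmap} is the right first step, but both halves of your argument contain genuine gaps. First, your treatment of the ``split class'' relies on the claim that, after deforming each constituent $f_{i}$ to realise $N(f_{i})$, generic perturbations make the graphs of the $f_{i}'$ pairwise disjoint, ``since generic perturbations cut down intersection loci of the graphs to empty sets''. This is false in dimension~$2$: for two maps $f,g\colon\thinspace \rp\to\rp$ the coincidence set is generically $0$-dimensional (a finite set of points), not empty, and whether it can be removed is governed by Nielsen--Lefschetz coincidence theory, not by transversality. Indeed \relem{prinWe}(\ref{it:prinCoin}) shows that the pair $(W_{P},c_{0})$ can \emph{never} be made coincidence free, so pairwise disjointness of graphs is a nontrivial constraint that must be engineered, not perturbed into existence. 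The paper handles this by constructing the explicit maps $W_{P_{1}},\ldots,W_{P_{n}}$ with $P_{1},\ldots,P_{n}$ on a common geodesic arc and \emph{proving} (\relem{prinWe}(\ref{it:prinWed})) that $\operatorname{\text{Coin}}(W_{P_{i}},W_{P_{j}})=\vide$ for $i\neq j$, so that $(W_{P_{1}},\ldots,W_{P_{n}})$ is a genuine $n$-ordered map each of whose coordinates has exactly one fixed point.

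Second, your dichotomy ``split class versus non-split class'' does not match the situation on $\rp$: by \cite[Lemma~14]{GG15} (used in \relem{rp2split}) \emph{every} $n$-valued map of $\rp$ is split, so there is no non-split class, and the whole apparatus of pulling $\Phi$ back along $p\colon\thinspace \St\to\rp$ and deforming equivariantly addresses a case that does not occur. The two classes of \repr{classmap} are both split; they are distinguished by whether a lift $\rp\to F_{n}(\rp)$ is null-homotopic or has coordinates homotopic to $W_{N}$. The correct argument is therefore uniform: by \relem{rp2split}, $N(\phi)=n$ for every $n$-valued map $\phi$ of $\rp$ (via \reth{helgath0} and $N(f_{i})=1$), and each of the two classes contains the explicit split representative $\brak{c_{\overline{P_{1}}},\ldots,c_{\overline{P_{n}}}}$ or $\brak{W_{P_{1}},\ldots,W_{P_{n}}}$ with exactly $n$ fixed points. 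Your admission that the ``technical work'' in the second case remains to be done is precisely where the proof is incomplete; that work is replaced in the paper by \relem{prinWe}.
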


In \resec{calcu}, we consider the problem of the computation of the Nielsen number of $n$-valued maps of a compact, orientable manifold $X$ without boundary. To do this, we shall use~\cite[Proposition~16]{GG15}, which given an $n$-valued map $\phi\colon \thinspace X \multimap X$, states that there exists a finite covering $q\colon\thinspace \widehat{X} \to X$ such that the composition $\phi\circ q \colon \thinspace \widehat{X} \multimap X$ is split, and~\cite[Proposition~17]{GG15}, which describes the fixed points of $\phi$ in terms of the coincidences of $q$ with the coordinate maps of a lift of the split $n$-valued map $\phi\circ q$. We analyse the behaviour of this description with respect to the essential Nielsen classes (\emph{i.e.}\ the Nielsen classes of non-zero index).  This yields a partial analogue of \reth{helgath0} in the non-split case as follows. 


\begin{thm} \label{th:form} 
Let $X$ be an orientable, compact manifold without boundary, and let $\phi\colon\thinspace X \multimap X$ be a non-split $n$-valued map. Let $\Phi\colon\thinspace X \to D_{n}(X)$ be the map associated to $\phi$, let $H$ be the kernel of the composition $\tau\circ \Phi_{\#} \colon\thinspace \pi_{1}(X)\to S_{n}$, let $L'=\im{\tau\circ \Phi_{\#}}$, and for $i=1,\ldots,n$, let $L_{i,i}'=\setl{\alpha\in L'}{\alpha(i)=i}$. Let $q\colon\thinspace \widehat{X} \to X$ be the finite regular covering of $X$ that corresponds to $H$. Let $\widehat{\Phi}_{1}=(f_{1},\ldots,f_{n}) \colon\thinspace \widehat{X} \to F_{n}(X)$ be a lift of the split $n$-valued map $\phi\circ q \colon \thinspace \widehat{X} \multimap X$, and suppose that the action of $L_{i,i}'$ on $\{1,\dots,n\}$ is free for all $i\in \{1,\dots,n\}$.
 Then the Nielsen number $N(\phi)$ of $\phi$ is equal to $\sum_{j\in I_0} N(q,f_{i_j})$, where $i_j$ runs over a set $I_{0}$ of representatives of the orbits of this action.
\end{thm}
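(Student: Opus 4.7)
The plan is to combine the set-theoretic decomposition of Proposition~17 of~\cite{GG15}, which gives $\operatorname{Fix}(\phi)=\bigcup_{i=1}^{n}q(\operatorname{Coin}(q,f_{i}))$, with an analysis of how the deck transformation group $G=\operatorname{Deck}(q)$ (canonically isomorphic to $L'$) acts on the disjoint union $\bigsqcup_{i}\operatorname{Coin}(q,f_{i})$, and then to pass from this set-level equality to a counting statement for essential Nielsen classes. The first step is to verify the permutation rule: each $\sigma\in G$, corresponding via $G\cong L'$ to $\alpha\in L'$, satisfies $f_{i}\circ\sigma=f_{\alpha(i)}$ for every $i$. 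This is standard covering-space theory applied to the lift $\widehat{\Phi}_{1}$ of $\phi\circ q$ through $\pi\colon F_{n}(X)\to D_{n}(X)$, together with the defining property $H=\ker(\tau\circ\Phi_{\#})$. Combined with $q\circ\sigma=q$, this gives an equivariant bijection $\operatorname{Coin}(q,f_{i})\to\operatorname{Coin}(q,f_{\alpha(i)})$ for each $\sigma$, so that $G$ acts on $\bigsqcup_{i}\operatorname{Coin}(q,f_{i})$ compatibly with its permutation action on $\{1,\ldots,n\}$, and $q$ factors through this action onto $\operatorname{Fix}(\phi)$.

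Next I would transport this picture to the Nielsen level. A Nielsen coincidence class $\mathcal{C}_{i}\subseteq\operatorname{Coin}(q,f_{i})$ projects under $q$ into a single Schirmer fixed-point class of $\phi$, by pushing witnessing paths in $\widehat{X}$ down to $X$; conversely, two such classes project to the same Nielsen class of $\phi$ if and only if they lie in a common $G$-orbit, which follows from a path-lifting argument using the permutation rule of the previous step. For local indices, the fact that $q$ is a local diffeomorphism (as a finite regular cover between compact orientable manifolds of equal dimension) implies that the coincidence index of $(q,f_{i})$ at a point $\hat{x}$ equals the Schirmer index at $x_{0}=q(\hat{x})$ of the local selection $f_{i}\circ(q|_{U})^{-1}$ of $\phi$, where $U$ is a sheet of $q$ containing $\hat{x}$.

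The freeness hypothesis now provides the combinatorial control needed to assemble these pieces. Since $L'_{i,i}$ by definition fixes $i$, acting freely on $\{1,\ldots,n\}$ forces $L'_{i,i}=\{e\}$; hence $L'$ acts freely on $\{1,\ldots,n\}$, and each orbit has size $|G|$. Choosing one index $i_{j}$ per orbit, the compatibility recorded in the first step implies that the essential coincidence classes of $(q,f_{i_{j}})$ furnish a complete, non-repetitive set of $G$-orbit representatives in $\bigsqcup_{i}\operatorname{Coin}(q,f_{i})$, while the Nielsen-theoretic discussion above ensures that each such orbit corresponds to exactly one essential Nielsen class of $\phi$ with matching index. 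Summing yields $N(\phi)=\sum_{j\in I_{0}}N(q,f_{i_{j}})$.

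The main obstacle will be the index-matching in this last step: one must verify that the local Schirmer index of $\phi$ at a fixed point $x_{0}$ is correctly recovered from the coincidence indices of the chosen representatives $(q,f_{i_{j}})$, with neither cancellation nor double counting across the orbit. The free-action hypothesis is precisely what rules out pathological collisions within an orbit that could force distinct coordinate functions $f_{i}$ to contribute redundantly to the index at the same coincidence point, so that selecting a single orbit representative recovers exactly the correct local contribution to the Nielsen number of $\phi$.
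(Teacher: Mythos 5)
Your proposal is correct and follows essentially the same route as the paper: the equivariance $\widehat{\Phi}_{1}(\alpha\cdot\widetilde{x})=\Gamma(\alpha)\cdot\widehat{\Phi}_{1}(\widetilde{x})$ (your permutation rule), the projection of Nielsen coincidence classes of $(q,f_i)$ onto Nielsen fixed point classes of $\phi$, the index identification via the local homeomorphism property of $q$, and the observation that freeness forces the stabilisers $L'_{i,i}$ to be trivial so that the induced map on Nielsen classes is injective are precisely the paper's Lemmas on this topic and its concluding argument. The only difference is presentational: you organise the bookkeeping as a $G$-action on $\bigsqcup_i\operatorname{\text{Coin}}(q,f_i)$, while the paper counts fibrewise via $\lvert L_i\rvert$ and $\lvert K_i(\widetilde{x}_0)\rvert$, which collapse to $1$ under the freeness hypothesis.
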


If $n=2$, the condition that the action be free is necessarily satisfied, and in this case, we shall prove in \reco{nielsenphi} that $N(\phi)=N(q, f_1)=N(q, f_2)$. The study of the Wecken property for multivalued maps of the torus constitutes work in progress by the authors, and some preliminary results that are closely related to this problem may be found in~\cite{GG15}.

\subsection*{Acknowledgements}

The first-named author was partially supported by FAPESP-Funda\c c\~ao de Amparo a Pesquisa do
Estado de S\~ao Paulo, Projeto Tem\'atico Topologia Alg\'ebrica, Geom\'etrica  e Diferencial 2012/24454-8. The second-named author was also partially supported by the same project as well as the CNRS/FAPESP PRC project n\up{o}~275209 during his visit to the Instituto de Matem\'atica e Estat\'istica, Universidade de S\~ao Paulo, from the 4\textsuperscript{th} to the 22\textsuperscript{nd} of February 2017.


\section{Preliminaries and the Wecken property for $n$-valued maps
 of the $2$-disc and $2$-sphere}\label{sec:pres}

Let $n\in \N$. Given a topological space $X$, we are interested in understanding the fixed point theory of its $n$-valued maps. In this section, we will show that if $n\in \N$ (resp.\ $n\geq 3$), the $2$-disc $\dt$ and the (resp.\ the $2$-sphere $\St$) has the Wecken property for $n$-valued maps. We first recall some definitions and results from~\cite{GG15}. Given an $n$-valued map $\phi\colon\thinspace X \multimap X$ as defined in \resec{intro}, let $\Phi \colon\thinspace X \to D_n(X)$ denote the associated $n$-unordered map, and if $\phi$ is split, let $\widehat{\Phi}\colon\thinspace X \to F_n(X)$ be an $n$-ordered map that is a lift of $\phi$.
By a single-valued map (or simply a map) $f\colon\thinspace X \to Y$ between two topological spaces, we shall always mean a continuous function from $X$ to $Y$, and $[X,Y]$ will denote the set of unbased homotopy classes of maps between $X$ and $Y$. We recall the following result, which shows that for a large class of spaces, (homotopy classes of) $n$-valued maps may be identified with (homotopy classes of) maps whose target is an unordered configuration space. Let $I$ denote 
the unit interval $[0,1]$.
\begin{thm}\label{th:metriccont}
Let $X$ and $Y$ be metric spaces, and let $n\in \N$.
\begin{enumerate}[(a)]
\item\label{it:metricconta} An $n$-valued function $\phi\colon\thinspace X \multimap  Y$ is continuous if and only if the corresponding function $\Phi\colon\thinspace  X \to D_n(Y)$ is continuous.
\item\label{it:metriccontb} The set of homotopy classes of $n$-valued maps from $X$ to $Y$
is in one-to-one correspondence with the set $[X,D_{n}(Y)]$ of homotopy classes of maps from $X$ to $D_n(Y)$.
\end{enumerate}
\end{thm}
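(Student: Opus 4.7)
The plan is to establish part (\ref{it:metricconta}) directly from the definitions, and then obtain part (\ref{it:metriccontb}) as an immediate corollary by applying part (\ref{it:metricconta}) to the product $X\times I$. The core issue in (\ref{it:metricconta}) is to reconcile the Vietoris-type hypothesis in the definition of an $n$-valued map (openness of the preimages $\{\phi(x)\subset V\}$ and $\{\phi(x)\cap V\neq\varnothing\}$) with continuity as a map $\Phi\colon\thinspace X\to D_n(Y)$ in the Hausdorff metric. Note first that each $\phi(x)$ is a finite subset of the metric space $Y$, hence automatically closed, so the closedness clause in the definition is vacuous.

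For the forward direction, suppose $\Phi$ is continuous, fix $x_0\in X$ and an open $V\subset Y$. If $\phi(x_0)\subset V$, the finiteness of $\phi(x_0)$ produces some $\varepsilon>0$ with the $\varepsilon$-neighbourhood of $\phi(x_0)$ in $V$, and continuity of $\Phi$ at $x_0$ gives a neighbourhood $U$ of $x_0$ with $d_H(\Phi(x),\Phi(x_0))<\varepsilon$ for $x\in U$, whence $\phi(x)\subset V$. If instead $\phi(x_0)\cap V\neq\varnothing$, I pick $y\in\phi(x_0)\cap V$ and $\varepsilon>0$ with the open ball $B(y,\varepsilon)\subset V$; Hausdorff continuity again yields a neighbourhood of $x_0$ on which $\phi(x)$ contains a point in $B(y,\varepsilon)$.

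For the converse, suppose $\phi$ is continuous and write $\phi(x_0)=\{y_1,\ldots,y_n\}$. Given $\varepsilon>0$, I shrink it if necessary so that the balls $B(y_i,\varepsilon)$ are pairwise disjoint, and set $V=\bigcup_{i=1}^n B(y_i,\varepsilon)$. The ``$\subset V$'' hypothesis applied to $V$ gives a neighbourhood $U_0$ of $x_0$ on which $\phi(x)\subset V$, and the ``$\cap V\neq\varnothing$'' hypothesis applied to each $B(y_i,\varepsilon)$ gives neighbourhoods $U_i$ on which $\phi(x)\cap B(y_i,\varepsilon)\neq\varnothing$. On $U=\bigcap_{i=0}^{n}U_i$, the cardinality $\lvert\phi(x)\rvert=n$ together with the disjointness of the balls forces $\phi(x)$ to meet each $B(y_i,\varepsilon)$ in exactly one point, so $d_H(\Phi(x),\Phi(x_0))<\varepsilon$, giving continuity of $\Phi$.

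Part (\ref{it:metriccontb}) then follows formally: the tautological bijection between $n$-valued functions $X\multimap Y$ and functions $X\to D_n(Y)$ restricts, by part (\ref{it:metricconta}) applied to the metric space $X\times I$, to a bijection between $n$-valued maps $X\times I\multimap Y$ and maps $X\times I\to D_n(Y)$. Since a homotopy of $n$-valued maps is by definition an $n$-valued map on $X\times I$ with prescribed restrictions at $t=0,1$, and a homotopy of maps into $D_n(Y)$ is defined analogously, this bijection descends to homotopy classes. The main technical delicacy is the counting argument in the converse of part (\ref{it:metricconta}): one must simultaneously exploit the two open-set conditions in the definition and use the invariance of the cardinality $\lvert\phi(x)\rvert=n$ together with the disjointness of the balls $B(y_i,\varepsilon)$ to force a bijective matching between the points of $\phi(x)$ and those of $\phi(x_0)$ within distance $\varepsilon$.
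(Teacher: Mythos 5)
Your proof is correct, and both directions of part~(a) are argued soundly: the forward implication uses compactness (here, finiteness) of $\phi(x_0)$ to pass from the Hausdorff metric to the two Vietoris conditions, and the converse combines the two open-set conditions with the disjointness of the balls $B(y_i,\varepsilon)$ and the constancy of the cardinality $\lvert\phi(x)\rvert=n$ to force a one-to-one matching, which is exactly what is needed to bound the Hausdorff distance. The difference from the paper is one of packaging rather than of substance: the paper's proof of part~(a) consists of a single citation of an earlier result (Theorem~8 of the authors' previous paper, which establishes precisely this equivalence for the Hausdorff metric placed on $D_n(Y)$ in its Appendix), and part~(b) is then obtained, exactly as you do, by applying that result to the metric space $X\times I$ and observing that homotopies of $n$-valued maps correspond tautologically to homotopies of maps into $D_n(Y)$. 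So you have in effect supplied the content that the paper outsources; the one point worth flagging is that your argument implicitly fixes a specific choice of metric on $D_n(Y)$ (the Hausdorff metric on $n$-point subsets), which is the choice the cited reference makes, and any metric inducing the quotient topology on $D_n(Y)=F_n(Y)/S_n$ would serve equally well for the statement as a topological one.
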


\begin{proof}
Part~(\ref{it:metricconta}) is a direct consequence of~\cite[Theorem~8]{GG15}.
For part~(\ref{it:metriccontb}), since $X\times I$ is also a metric space, we may apply~\cite[Theorem~8]{GG15}
to $n$-valued maps between $X\times I$ and $Y$.
So it follows that two $n$-valued maps $\phi_1, \phi_2\colon\thinspace X\multimap Y$
are homotopic if and only if the corresponding maps $\Phi_1, \Phi_2\colon\thinspace X \to D_{n}(Y)$
are homotopic, and the result follows.
\end{proof}

\begin{rem}\label{corresp}
\reth{metriccont} also holds under weaker hypotheses.
\begin{enumerate}[(a)]
\item If we assume just that $Y$ is a metric space, then the statement of \reth{metriccont}(\ref{it:metricconta}) is~\cite[Corollary~4.1]{Brr7}, and that of \reth{metriccont}(\ref{it:metriccontb}) follows by applying this corollary to the spaces $X\times I$ and $Y$.

\item If $X$ is just locally path-connected and semi-locally simply connected then so is $X\times I$. The statement of \reth{metriccont}(\ref{it:metricconta}) is~\cite[Corollary~3.1]{Brr7}, and that of \reth{metriccont}(\ref{it:metriccontb}) follows by applying this corollary to the spaces $X\times I$ and $Y$. 
\end{enumerate}
\end{rem}

From now on, we will assume that our spaces are metric spaces, so that we may interpret an $n$-valued map from $X$ to $Y$ as a map from $X$ to $D_n(Y)$ using \reth{metriccont}(\ref{it:metricconta}).


\subsection{The case of  $n$-valued maps of the $2$-disc}\label{sec:wecdisc}

\begin{prop}\label{prop:fpp}
The $2$-disc $\dt$ has the Wecken property for $n$-valued maps for all $n\in \N$.
\end{prop}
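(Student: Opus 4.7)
The plan is to exploit the contractibility of $\dt$ via the configuration space formulation given by \reth{metriccont}. Let $\phi\colon\thinspace\dt \multimap \dt$ be an $n$-valued map, and let $\Phi\colon\thinspace \dt \to D_n(\dt)$ be the associated $n$-unordered map. Since $\dt$ is contractible and $D_{n}(\dt)$ is path-connected, $\Phi$ is homotopic to a constant map $\Phi_{0}$ at some point $\{x_{1},\ldots,x_{n}\}\in D_{n}(\dt)$. By \reth{metriccont}(\ref{it:metriccontb}), $\phi$ is itself homotopic to the $n$-valued map $\phi_{0}\colon\thinspace\dt\multimap\dt$ associated to $\Phi_{0}$, and this $\phi_{0}$ is a split $n$-valued map, namely $\phi_{0}=\{c_{1},\ldots,c_{n}\}$, where for each $i=1,\ldots,n$, $c_{i}\colon\thinspace\dt\to\dt$ is the constant map taking the value $x_{i}$.

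Next, I would compute $N(\phi_{0})$. For every $i$, the constant map $c_{i}$ has exactly one fixed point $x_{i}$, whose fixed point index equals $1$, so $N(c_{i})=1$. Since $\dt$ is a compact polyhedron, \reth{helgath0} applies and gives
\begin{equation*}
N(\phi_{0})=N(c_{1})+\cdots+N(c_{n})=n.
\end{equation*}
As the Nielsen number of an $n$-valued map is a homotopy invariant, $N(\phi)=N(\phi_{0})=n$.

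Finally, one observes that $\operatorname{Fix}(\phi_{0})=\{x_{1},\ldots,x_{n}\}$, which has cardinality exactly $n$. Thus $\phi_{0}$ is a representative of the homotopy class of $\phi$ with precisely $N(\phi)$ fixed points, and the Wecken property holds. There is no real obstacle in this argument: the only thing to be verified carefully is the correspondence between homotopies of $n$-valued maps and homotopies of the associated maps into the configuration space, which is exactly what \reth{metriccont}(\ref{it:metriccontb}) supplies; everything else is forced by contractibility of $\dt$ and by \reth{helgath0}.
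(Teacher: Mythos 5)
Your proof is correct and follows essentially the same route as the paper: contractibility of $\dt$ forces every $n$-valued map to be homotopic to the constant split map $\brak{c_{1},\ldots,c_{n}}$, whose Nielsen number is $n$ by \reth{helgath0} and which has exactly $n$ fixed points. Nothing is missing.
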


\begin{proof}
Let $n\in \N$, and let $(x_{1},\ldots,x_{n})\in F_{n}(\dt)$. By Theorem \ref{th:metriccont}(\ref{it:metriccontb}),  the set of homotopy classes of $n$-valued maps of the $2$-disc $\dt$ may be identified with the set $[\dt, D_n(\dt)]$ of homotopy classes of maps between $\dt$ and $D_{n}(\dt)$. Since the domain $\dt$ is contractible, this set has only one element, which is the class of the constant map $c=\brak{c_{1},\ldots,c_{n}}
\colon\thinspace \dt \to D_n(\dt)$, where for all $i=1,\ldots,n$, the map $c_i\colon\thinspace \dt \to \dt$ is the constant map at $x_i$. Then the map $c$ has exactly $n$ fixed points, and $N(c)=n$ by \reth{helgath0}, which proves the proposition.
\end{proof}

\subsection{The case  of $n$-valued maps of the $2$-sphere}\label{sec:nvalS2}

We now show that the Wecken property holds for $n$-valued maps of $\St$ for all $n\neq 2$. The case $n=2$ remains open, although we are able to provide some partial results (see also~\cite{Brr0}). Since the case $n=1$ is well known, see for example~\cite[Proposition~2.2]{BGZ},
we will assume that $n\geq 2$.

\begin{lem}\label{lem:sphWec}
If $n>2$ (resp.\ $n=2$), the set of homotopy classes of $n$-valued maps of $\St$ possesses exactly one element (resp.\ is in one-to-one correspondence with $\N$).
\end{lem}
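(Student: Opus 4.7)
My plan is to apply Theorem~\ref{th:metriccont}(\ref{it:metriccontb}) to identify the set of homotopy classes of $n$-valued maps of $\St$ with $[\St, D_{n}(\St)]$, and then to compute this latter set. Since $\St$ is simply connected and $D_{n}(\St)$ is path-connected, standard obstruction theory provides a bijection $[\St, D_{n}(\St)] \cong \pi_{2}(D_{n}(\St))/\pi_{1}(D_{n}(\St))$, where $\pi_{1}$ acts on $\pi_{2}$ via its usual action. Because the projection $\pi\colon\thinspace F_{n}(\St) \to D_{n}(\St)$ is a covering map, $\pi_{2}(D_{n}(\St)) \cong \pi_{2}(F_{n}(\St))$, so the problem reduces to computing $\pi_{2}(F_{n}(\St))$, together with the induced action of the deck transformation group in the case $n=2$.

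For $n \geq 3$, I would show that $\pi_{2}(F_{n}(\St)) = 0$, which immediately forces $\lvert [\St, D_{n}(\St)] \rvert = 1$. The case $n = 3$ uses the classical identification $F_{3}(\St) \simeq \operatorname{PSL}(2, \C) \simeq \operatorname{SO}(3) \simeq \R P^{3}$, obtained by sending a triple of points of $\St = \C P^{1}$ to the unique M\"obius transformation carrying it to $(0, 1, \infty)$; since the universal cover of $\R P^{3}$ is the $2$-connected space $\St[3]$, one has $\pi_{2}(\R P^{3}) = 0$. For $n \geq 4$, I would apply the Fadell--Neuwirth fibration
\begin{equation*}
F_{n-3}(\St \setminus \{q_{1}, q_{2}, q_{3}\}) \to F_{n}(\St) \to F_{3}(\St),
\end{equation*}
whose fiber is a $K(\pi, 1)$, since the configuration space of an open surface with free fundamental group is aspherical. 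The long exact sequence, combined with $\pi_{2}(F_{3}(\St)) = 0$, then gives the claimed vanishing.

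For $n = 2$, the Fadell--Neuwirth projection $F_{2}(\St) \to \St$, $(x, y) \mapsto x$, is a fibration with contractible fiber $\St \setminus \{x\}$, so $F_{2}(\St) \simeq \St$; in particular, $\pi_{1}(F_{2}(\St)) = P_{2}(\St) = 1$ and $\pi_{2}(F_{2}(\St)) \cong \Z$. To identify the action of the deck transformation $\sigma(x, y) = (y, x)$ on $\pi_{2}$, I would realise the equivalence by the $\Z/2$-equivariant map $r\colon\thinspace F_{2}(\St) \to \St$, $r(x, y) = (x - y)/\lVert x - y \rVert$ (viewing $\St \subset \R^{3}$), with quasi-inverse $u \mapsto (u, -u)$. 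Under $r$, the swap $\sigma$ corresponds to the antipodal map of $\St$, which has degree $-1$ and therefore acts by $-1$ on $\pi_{2}(\St) \cong \Z$. Consequently, $[\St, D_{2}(\St)] \cong \Z/(\pm 1)$, which is in bijection with $\N$.

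The most delicate step is the $n = 2$ case: checking that $r$ is indeed a homotopy equivalence and that it intertwines $\sigma$ with the antipodal map. The intertwining is immediate from the formula, while the homotopy equivalence follows by exhibiting $u \mapsto (u, -u)$ as a section of $r$ and constructing an explicit homotopy inside $F_{2}(\St)$ from the composition $r^{-1} \circ r$ to the identity, through pairs of points that remain distinct throughout; the remaining geometric input is the classical computation of the degree of the antipodal map of $\St$.
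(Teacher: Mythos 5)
Your proof is correct and follows essentially the same route as the paper: reduce to $[\St, D_{n}(\St)]$ via \reth{metriccont}, identify this set with the orbits of $\pi_{2}(D_{n}(\St))\cong\pi_{2}(F_{n}(\St))$ under the $\pi_{1}$-action, and for $n=2$ use a $\Z_{2}$-equivariant homotopy equivalence between $F_{2}(\St)$ with the swap and $\St$ with the antipodal map to see that the action is multiplication by $-1$, so the orbits are the sets $\brak{m,-m}$. The only difference is that where the paper cites~\cite{BCP,FZ,GG10} for the homotopy type of the universal cover of $F_{n}(\St)$, you derive $\pi_{2}(F_{n}(\St))=0$ for $n\geq 3$ directly from $F_{3}(\St)\simeq \R P^{3}$ and the Fadell--Neuwirth fibration over $F_{3}(\St)$ with aspherical fibre, which is a sound, self-contained substitute.
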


\begin{proof}
\reth{metriccont}(\ref{it:metriccontb}) 
implies that it suffices to determine the set $[\St, D_n(\St)]$, which in turn is the orbit space of $\pi_{2}(D_{n}(\St))$ under the action of $\pi_1(D_{n}(\St))$. Now $F_{n}(\St)$ is a regular $n!$-fold covering of $D_{n}(\St)$, and if $n=2$ (resp.\ $n\geq 3$), the universal covering of $F_{n}(\St)$ has the homotopy type of $\St$ (resp.\ of $\St[3]$) by~\cite{BCP,FZ} or~\cite[pp.~43--44]{GG10}. Using standard results about homotopy groups and covering spaces, it follows that $\pi_{2}(D_{n}(\St))$ is isomorphic to $\Z$ (resp.\ is trivial) if $n=2$ (resp.\ $n>2$) (see also~\cite[Corollary, page~211]{FvB} for the case $n\geq 3$). This proves the result if $n\geq 3$. If $n=2$, observe that the map $\St \to F_2(\St)$ given by $x \longmapsto (x,-x)$ is a homotopy equivalence that is $\Z_2$-equivariant with respect to the action of the antipodal map on $\St$ and the action on $F_2(\St)$ given by $s\colon\thinspace F_2(\St) \to F_2(\St)$ defined by $s(x,y)=(y,x)$. This gives rise to a homotopy equivalence between the corresponding orbit spaces, namely $\rp$ and $D_2(\St)$. Since the action of $\pi_1(\rp)\cong \Z_2$ on $\pi_2(\rp) \cong \Z$ is multiplication by $-1$, the same is true for the action of $\pi_1(D_2(\St)$ on $\pi_2(D_2(\St))$, and the orbits are the subsets of the form $\brak{m,-m}$, where $m\in \Z$. It follows that the set $[\St, D_2(\St)]$ is in bijection with $\N$.
\end{proof}


%

We now study the Wecken property for $n$-valued maps of $\St$. 
Recall that if $\phi\colon\thinspace \St \multimap \St$ is a $2$-valued map, the integer given by the correspondence with $\N$ of \relem{sphWec} is the \emph{degree} of $\phi$ (or of its homotopy class). 

\begin{proof}[Proof of \repr{propscst}]\mbox{}
\begin{enumerate}[(a)]
\item Part~(\ref{it:propscsta}) is clear since the constant map $f_{0}\colon\thinspace\St\to \St$ at a point $x_{0}\in \St$ has precisely one fixed point, namely $x_{0}$, and $-x_{0}$ is the unique fixed point of $A\circ f_{0}$. 
\item Let $f_1$ be a self-map of the unit $2$-sphere $\St$ that is a small deformation of the identity, \emph{i.e.}\ $f_1$ satisfies $\left\lvert x-f_1(x)\right\rvert<\pi/2$ for all $x\in \St$, and that has exactly one fixed point $x_{0}$. Such a map may be constructed using a vector field on $\St$ that possesses just one singular point, and in this way, the degree of $f_{1}$ is equal to $1$. 
Then $f_1(x)\neq -x$ for all $x\in \St$, and thus $A\circ f_1$ is fixed point free, which concludes the proof of  part~(\ref{it:propscstb}).  

\item Consider the self map $\rho\colon\thinspace \St[1]\to \St[1]$ given by $z\longmapsto z^2$. This map has one fixed point, which is $\{1\}$, and the map $A\circ \rho$ has a single fixed point,
which is $\{-1\}$. 
Now consider the reduced suspension of $\St[1]$, \emph{i.e.}\
$(\St[1]\times [0,1])/(\{1\}\times [0,1]\cup \St[1]\times \{0,1\})=\St$, 
and let $f_2$ be the suspension 
of the map $\rho$. Then $f_2$  has only one fixed point which is the equivalence class of the point $\{1\}\times \{0\}$, 
and the map $A\circ f_2$ has a single fixed point,
which is the equivalence class of $\{-1\}\times \{1/2\}$, so it has the desired property. 
\item To prove part~(\ref{it:propscstb1}), by \relem{sphWec}, there is only one homotopy class of $n$-valued maps of $\St$ if $n\geq 3$, which is that of the constant $n$-valued map. The result then follows from part~(\ref{it:propscsta}).
To prove part~(\ref{it:propscstb2}), the case of the homotopy class of degree $0$ follows as in part~(\ref{it:propscstb1}). Now let $i\in \brak{1,2}$. For the homotopy class of degree $i$,  consider the split $2$-valued map $\phi_i=\brak{f_i,A\circ f_i}\colon\thinspace \St \multimap \St$. Then $L(f_i)=1+i$, so $N(f_i)\neq 0$, and thus $N(f_i)=1$ because $\St$ is simply connected. Hence by \reth{helgath0},
$N(\phi_i)=N(f_i)+N(A\circ f_i)$, and so $N(\phi_1)=1+0=1$ and $N(\phi_2)=1+1=2$. Since $\operatorname{\text{Fix}}(\phi_1)=\brak{x_{0}}$, where $x_{0}$ is the fixed point of $f_{1}$ given
in the proof of part~(\ref{it:propscstb}),
and $\operatorname{\text{Fix}}(\phi_2)=\brak{1, -1}$, it follows 
that the 
map $\phi_i$ has the Wecken property.\qedhere
\end{enumerate}
\end{proof}

\section{The case of $n$-valued maps of the projective plane}\label{sec:proj}

The aim of this section is to prove that the projective plane $\rp$ has the Wecken property for $n$-valued maps for all $n\in \N$.
Jiang showed that $\rp$ has the Wecken property for single-valued maps~\cite{Ji}, so it will suffice to study the case $n\geq 2$. We start by computing the Nielsen number of an $n$-valued map of $\rp$. 

\begin{lem}\label{lem:rp2split}
Let $n\geq 1$, and let $\phi\colon\thinspace \rp\multimap \rp$ be an $n$-valued map. Then $N(\phi)=n$.
\end{lem}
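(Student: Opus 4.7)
My plan is to reduce the computation of $N(\phi)$ to the single-valued situation on $\rp$, treating the split and non-split cases separately and using the orientation double cover $q\colon\thinspace \St\to \rp$ for the second.

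\emph{Split case.} Since $\rp$ has trivial rational reduced homology, every self-map $f\colon\thinspace\rp\to\rp$ has rational Lefschetz number $L(f)=1$. Combined with Jiang's theorem that $\rp$ has the Wecken property for single-valued maps, this forces $N(f)=1$ for every self-map of $\rp$. If $\phi=\{f_1,\ldots,f_n\}$ is a split $n$-valued map of $\rp$, Theorem~\ref{th:helgath0} then yields $N(\phi)=\sum_{i=1}^n N(f_i)=n$.

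\emph{Non-split case.} Let $\Phi\colon\thinspace \rp\to D_n(\rp)$ be the associated map. Non-splitness means that $\tau\circ\Phi_\#\colon\thinspace \pi_1(\rp)\cong \Z_2 \to S_n$ is nontrivial, hence injective, with image generated by an involution $\sigma$. By~\cite[Proposition~16]{GG15}, the composite $\phi\circ q\colon\thinspace\St\multimap\rp$ is split; writing $\phi\circ q=\{f_1,\ldots,f_n\}$ and letting $A$ denote the antipodal map of $\St$, the constituent maps satisfy the equivariance $f_{\sigma(i)}=f_i\circ A$. I would then invoke~\cite[Proposition~17]{GG15} to identify $\operatorname{Fix}(\phi)$ with the orbit space of $A$ acting on the disjoint union $\bigsqcup_{i_j}\operatorname{Coin}(q,f_{i_j})$ as $i_j$ runs over representatives of the $\sigma$-orbits in $\{1,\ldots,n\}$.

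The computation is then completed orbit by orbit by reducing each Nielsen coincidence problem $(q,f_{i_j})$ to a Nielsen fixed-point problem on $\rp$. If $\sigma(i_j)=i_j$, the equivariance forces $f_{i_j}$ to descend through $q$ to a self-map $\bar f_{i_j}$ of $\rp$ with $\operatorname{Coin}(q,f_{i_j})=q^{-1}(\operatorname{Fix}(\bar f_{i_j}))$; if $\sigma$ swaps $i_j$ with $i_j'$, an analogous descent on the unordered pair $\{f_{i_j},f_{i_j'}\}$ realises the coincidence set as a double cover of a fixed-point set on $\rp$. Using the split-case conclusion $N(\bar f_{i_j})=1$, each fixed orbit contributes~$1$ and each transposition orbit contributes~$2$, so the total is exactly $n$.

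The main obstacle I expect is the Nielsen-class bookkeeping in the non-split case: one must verify that each $A$-orbit of coincidences represents a genuinely essential Nielsen class of $\phi$ in Schirmer's sense, and that distinct $A$-orbits descend to distinct Nielsen classes of $\phi$. This is where the simple connectivity of the universal cover $\St$ of $\rp$ is crucial, making the Reidemeister action transparent and allowing the $N=1$ conclusion for self-maps of $\rp$ to propagate cleanly to give exactly one essential class per orbit.
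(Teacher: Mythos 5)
Your split-case argument is essentially the paper's entire proof: the paper cites Jiang~\cite{Ji} for the fact that $N(f)=1$ for every self-map $f$ of $\rp$, and then applies \reth{helgath0} to conclude $N(\phi)=n$. Note, though, that your derivation of $N(f)=1$ is not quite right as stated: $L(f)=1\neq 0$ together with the Wecken property gives $N(f)\geq 1$, not $N(f)\leq 1$; the upper bound needs the standard analysis of the Reidemeister/Nielsen classes of self-maps of $\rp$, or simply the citation of~\cite{Ji}.

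The genuine gap is in your non-split case, and it is twofold. First, the case is vacuous: by~\cite[Lemma~14]{GG15} every $n$-valued map of $\rp$ is split, which is exactly how the paper disposes of the matter in one line (and is also what later allows the homotopy classes of $n$-valued maps of $\rp$ to be classified via lifts to $F_n(\rp)$ in \repr{classmap}). Second, and more seriously had the case not been empty, your reduction to coincidence Nielsen theory for the pairs $(q,f_{i_j})$ is not completed and cannot be completed with the tools of \resec{calcu}: the index and essentiality comparisons there (\relem{index}, \reco{indexessent}, \reth{form}) are established only for an \emph{orientable} target $X$, and $\rp$ is non-orientable --- the paper explicitly flags the non-orientable situation as unresolved. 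You also leave the key step, that each $A$-orbit of coincidences yields exactly one essential Nielsen class of $\phi$, as an acknowledged ``obstacle'' rather than proving it. The repair is simply to prove or cite that non-split $n$-valued maps of $\rp$ do not exist, after which your first paragraph already gives the result.
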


\begin{proof}
From~\cite[Lemma~14]{GG15}, $\phi$ is split, so for $i=1,\ldots,n$, there exist self-maps $f_i\colon\thinspace\rp \to \rp$  such that $\phi=\brak{f_1,\ldots,f_n}$. Applying \reth{helgath0}, $N(\phi)=n$ since $N(f_i)=1$ for all $i=1,\ldots,n$ by~\cite{Ji}.
\end{proof}


We now classify the homotopy classes of $n$-valued maps of $\rp$. If $n=1$, it is well known that the set $[\rp, \rp]$ of homotopy classes of self-maps of $\rp$ that induce the trivial homomorphism on the level of fundamental groups has two elements, see~\cite[Proposition~2.1]{GS} for example. One of these two homotopy classes is that of the constant map. We will describe the second homotopy class in terms of a  representative map $W_{P}\colon\thinspace \rp\to \rp$ that we shall now define, where $P\in \St$, and $\St\subset \R^3$ is the unit sphere in $\R^3$, which we equip with spherical coordinates $(\theta,\varphi)$, where $\theta\in [0,2\pi)$ and $\varphi\in [-\pi/2,\pi/2]$, so that $P=(\theta,\pi/2)$. 
With respect to the Cartesian coordinate system for which $P=(0,0,1)$ and the point $(1,0,0)$ has spherical coordinates $(0,0)$, the point with spherical coordinates $(\theta,\varphi)$ has Cartesian coordinates $(\cos \varphi \cos\theta, \cos \varphi \sin\theta,\sin \varphi)$. From this, one may see that with respect to the spherical coordinate system, $(\theta,\varphi-\frac{\pi}{2})$ may be identified with $(\theta+\pi, -\varphi-\frac{\pi}{2})$. We regard $\rp$ as the quotient of $\St$ by the free action of the group generated by the antipodal map $A$. Let $p\colon\thinspace \St \to \rp$ be the usual covering map, for all $x\in \St$, let $\overline{x}=p(x)$, and let $H_P^{+}$ be the hemisphere of $\St$ whose pole is $P$. 
 Let $U_{P}\colon\thinspace \St\to \St$ be the map defined by $U_{P}(\theta,\varphi)=(\theta,2\varphi-\frac{\pi}{2})$. The restriction $U_{P}\bigl\lvert_{H_{P}^{+}}\bigr. \colon\thinspace H_P^{+} \to \St$ sends each semi-meridian lying in $H_P^{+}$ that starts at $P$ and ends at the equator linearly to the meridian of $\St$ that starts at $P$, ends at $-P$ and contains the original semi-meridian. Since $U_{P}$ sends the whole of the equator to the point $-P$, $U_{P}\bigl\lvert_{H_{P}^{+}}\bigr.$ induces a map $W_P'\colon\thinspace \rp \to \St$ defined by $W_{P}'(\overline{x})= U_{P}(x)$ for all $x\in H_{P}^{+}$. Let $W_{P}\colon\thinspace \rp\to \rp$ be defined by $W_{P}=p\circ W_{P}'$. Now $A(\theta,\varphi)=(\theta+\pi,-\varphi)$, and up to the above-mentioned identification within the spherical coordinate system, for all $(\theta,\varphi)\in \St$, we have:
\begin{align*}
 U_{P}\circ A(\theta,\varphi)&= \textstyle U_{P}(\theta+\pi,-\varphi)=(\theta+\pi,-2\varphi-\frac{\pi}{2})= (\theta,2\varphi-\frac{\pi}{2}) =  U_{P}(\theta,\varphi),   
\end{align*}
and it follows that $U_{P}$ is a lift of $W_{P}$. We thus have the following commutative diagram: 
\begin{equation*}
\begin{tikzcd}
H_{P}^{+} \ar[hookrightarrow]{r} \ar[swap]{rd}{U_{P}\bigl\lvert_{H_{P}^{+}}\bigr.} & \St \ar{d}{U_{P}} \ar{r}{p} & \rp \ar{d}{W_{P}}\\
& \St \ar{r}{p} & \rp,
\end{tikzcd}
\end{equation*}
for which $W_{P}=p\circ W_{P}'$ also. The following lemma summarises various properties of $W_{P}$.

\begin{lem}\label{lem:prinWe}
Let $P\in \St$. The map $W_{P}\colon\thinspace \rp \to \rp$ satisfies the following properties:
\begin{enumerate}[(a)]
\item\label{it:prinWea} $W_{P}=W_{-P}$, and $\operatorname{\text{Fix}}(W_{P})=\brak{p(P)}$.
\item\label{it:prinWeb} The map $W_{P}$ is non null-homotopic, so it belongs to the non-constant homotopy class of $[\rp, \rp]$ that induces  the trivial homomorphism on the fundamental group.
\item\label{it:prinWed} Let $P_{1},P_{2}\in \St$. If $P_1\neq \pm P_2$, then $\operatorname{\text{Coin}}(W_{P_1}, W_{P_2})$ 
is empty.
\item\label{it:prinCoin} If $c_0\colon\thinspace \rp \to \rp$ is a constant map then the pair $(W_P, c_0)$ cannot be deformed to a coincidence-free pair.
\end{enumerate}
\end{lem}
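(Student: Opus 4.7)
The plan is to treat the four parts using the following geometric description of $U_{P}$: for $x\in\St$ not antipodal to $P$, $U_{P}(x)$ is the \emph{reflection of $P$ across $x$}, i.e.\ the point at spherical distance $2d(P,x)$ from $P$ along the unique great circle through $P$ and $x$. Parametrising this common great circle by angle (with $P$ at $0$ and $x$ at $d$) places $U_{P}(x)$ at $2d$ and $U_{-P}(x)$ at $2d-\pi$, so $U_{-P}(x)=A(U_{P}(x))$, from which part~(\ref{it:prinWea}) follows at once since $W_{-P}=p\circ U_{-P}=p\circ A\circ U_{P}=p\circ U_{P}=W_{P}$. For the fixed points in~(\ref{it:prinWea}), $\overline{x}\in\operatorname{Fix}(W_{P})$ iff $U_{P}(x)\in\{x,-x\}$, and the coordinate formula $U_{P}(\theta,\varphi)=(\theta,2\varphi-\pi/2)$ on $H_{P}^{+}$, together with the antipodal identification at the polar boundary, forces $\varphi=\pi/2$ in either case, i.e.\ $x=P$.

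For part~(\ref{it:prinWeb}), $W_{P}=p\circ W_{P}'$ with $\St$ simply connected, so $W_{P}$ is $\pi_{1}$-trivial. To prove it is non-null-homotopic I identify the class of $W_{P}'$ in $[\rp,\St]\cong H^{2}(\rp;\Z)=\Z/2$ via the cellular extension invariant. The unique $2$-cell of $\rp$ has characteristic map $p|_{H_{P}^{+}}$, so the composition with $W_{P}'$ is $U_{P}|_{H_{P}^{+}}$, which by the explicit formula is a homeomorphism from the interior of $H_{P}^{+}$ onto $\St\setminus\{-P\}$ and collapses the equator to $-P$. The induced map on the quotient $H_{P}^{+}/\partial H_{P}^{+}=\St$ therefore has degree $\pm 1$, and this value is odd modulo the image $2\Z$ of the cellular coboundary coming from the degree-$2$ attaching map. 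Hence $W_{P}'$ represents the nontrivial class in $[\rp,\St]$. Since any null-homotopy of $W_{P}$ would lift through $p$ (using the $\pi_{1}$-triviality of all the maps involved) to one of $W_{P}'$, the map $W_{P}$ is not null-homotopic.

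For part~(\ref{it:prinWed}), suppose $\overline{x}\in\operatorname{Coin}(W_{P_{1}},W_{P_{2}})$, so $U_{P_{1}}(x)=\epsilon\,U_{P_{2}}(x)$ for some $\epsilon\in\{\pm 1\}$. Let $C_{i}$ be the great circle through $P_{i}$ and $x$. If $C_{1}\neq C_{2}$, then $C_{1}\cap C_{2}=\{x,-x\}$, and because $U_{P_{1}}(x)\in C_{1}$ and $\epsilon U_{P_{2}}(x)\in C_{2}$ (using $A(C_{2})=C_{2}$), the common value lies in $\{x,-x\}$; the implication $U_{P}(y)\in\{y,-y\}\Rightarrow y=\pm P$ from the proof of~(\ref{it:prinWea}) then gives $x=\pm P_{1}=\pm P_{2}$, contradicting $P_{1}\neq\pm P_{2}$. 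If $C_{1}=C_{2}$, parametrise the common great circle by $\alpha\in[0,2\pi)$ with $P_{1}$ at $\alpha=0$ and $P_{2}$ at $\alpha=\beta\notin\{0,\pi\}$; the reflection description places $U_{P_{i}}(x)$ at angular position $2\alpha_{x}-\alpha_{P_{i}}$, so the coincidence equation reduces to $\beta\equiv 0$ or $\beta\equiv\pi\pmod{2\pi}$, again contradicting $P_{1}\neq\pm P_{2}$.

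For part~(\ref{it:prinCoin}), argue by contradiction using the covering $p\times p\colon\St\times\St\to\rp\times\rp$. Assume $(W_{P},c_{0})$ is homotopic, as a map $\rp\to\rp\times\rp$, to a pair landing in $F_{2}(\rp)$, and choose $a\in p^{-1}(\overline{a})$ where $c_{0}\equiv\overline{a}$. Every intermediate map of the homotopy is $\pi_{1}$-trivial (both coordinates remain homotopic to $\pi_{1}$-trivial maps), so the homotopy lifts through $p\times p$ to $\St\times\St$ starting at the chosen lift $(W_{P}',c_{0}')$ with $c_{0}'\equiv a$. Because $p$ identifies only antipodal points, the coincidence-free condition at time~$1$ upgrades to values in $E:=\St\times\St\setminus(\Delta\cup\overline{\Delta})$, where $\overline{\Delta}=\{(y,-y)\}$. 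The second projection $E\to\St$ is a fibration with cylindrical fibre $\St\setminus\{y,-y\}\simeq\St[1]$; applying the covering homotopy property to straighten the second coordinate to the constant $a$ while remaining in $E$ produces a map homotopic to $W_{P}'$ that lands in $\St\setminus\{a,-a\}\simeq\St[1]$. Since $[\rp,\St[1]]=H^{1}(\rp;\Z)=0$, $W_{P}'$ would then be null-homotopic, contradicting part~(\ref{it:prinWeb}). The main technical obstacle is in part~(\ref{it:prinCoin}): one must verify that the entire homotopy (not only its endpoints) is $\pi_{1}$-trivial so as to admit a global lift through $p\times p$, and then carry out the straightening inside $E$ via its fibration structure without reintroducing coincidences.
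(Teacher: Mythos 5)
Your proof is correct. Parts (a)--(c) follow essentially the same route as the paper: the explicit angle-doubling description of $U_{P}$ (your identity $U_{-P}=A\circ U_{P}$ is a clean repackaging of the paper's coordinate computation for~(a)), the mod-$2$ degree of $W_{P}'$ as the non-trivial element of $[\rp,\St]\cong H^{2}(\rp;\Z)$ for~(b), and for~(c) the same two sub-arguments, only organised by whether the great circles through $P_{i}$ and $x$ coincide rather than by whether $x$ lies on the great circle through $P_{1}$ and $P_{2}$; you should add a word about the degenerate case $x\in\{\pm P_{1},\pm P_{2}\}$, where ``the'' great circle $C_{i}$ is not unique (there the coincidence hypothesis immediately forces $\overline{x}\in\operatorname{\text{Fix}}(W_{P_{1}})\cap \operatorname{\text{Fix}}(W_{P_{2}})$, giving $P_{1}=\pm P_{2}$ directly), and note that in~(a) the case $U_{P}(x)=-x$ simply has no solution rather than forcing $\varphi=\pi/2$. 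Part~(d) is where you genuinely diverge. The paper quotes Brooks's theorem that a coincidence-free deformation of the pair can be achieved by moving only the first map, so that some $h\simeq W_{P}$ is non-surjective; its lift to $\St$ is then non-surjective, hence null-homotopic, contradicting~(b). You instead lift the entire homotopy of the pair through $p\times p$ (legitimate, since the induced map on $\pi_{1}(\rp\times I)$ is determined by its restriction to $\rp\times\{0\}$, where both coordinates are $\pi_{1}$-trivial --- the ``obstacle'' you flag is automatic), land in $E=\St\times\St\setminus(\Delta\cup\overline{\Delta})$, and use the bundle $\operatorname{pr}_{2}\colon E\to\St$ with fibre $\St\setminus\{y,-y\}\simeq\St[1]$ to push the second coordinate back to the constant, forcing $W_{P}'$ to compress through $\St[1]$ and hence to be null-homotopic since $[\rp,\St[1]]=H^{1}(\rp;\Z)=0$. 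Your version is self-contained and in effect reproves the relevant special case of Brooks's theorem by hand via the evaluation-type fibration; the paper's is shorter at the price of the citation. Both are sound.
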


  
\begin{proof}\mbox{}
\begin{enumerate}[(a)]
\item Let $\overline{x}\in \rp$, where $\overline{x}=p(x)$ and we take $x$ to belong to $H_{P}^{+}$. Thus $-x\in H_{-P}^{+}$, $p(-x)=\overline{x}$, and so $W_{-P}'(\overline{x})=U_{-P}(-x)$. With respect to the same spherical coordinate system that was used to define $U_{P}$, one may see 
that $U_{-P}(\theta,\varphi)= (\theta,2\varphi+\frac{\pi}{2})$ for all $(\theta,\varphi) \in \St$. From the above definitions, if $x=(\theta,\varphi)$, we have:
\begin{equation*}
W_{P}(\overline{x}) \textstyle=p\circ W_{P}'(\overline{x})= p\circ U_{P}(x)= \textstyle p(\theta,2\varphi-\frac{\pi}{2}), 
\end{equation*}
from which it follows that:
\begin{align*}
W_{-P}(\overline{x}) & \textstyle=p\circ W_{-P}'(\overline{x})= p\circ U_{-P}(-x)= p\circ U_{-P}(\theta+\pi,-\varphi)= p(\theta+\pi, -2\varphi+\frac{\pi}{2})\\
& \textstyle=p\circ A(\theta,2\varphi-\frac{\pi}{2})=p(\theta,2\varphi-\frac{\pi}{2})=W_{P}(\overline{x}),
\end{align*}
hence $W_{P}=W_{-P}$ as required.
\item Since $W_P$ factors through $\St$, it induces the trivial homomorphism on the fundamental group of $\rp$. Further, the map $W_P'$ is a lift of $W_P$, so is non null-homotopic (it represents the non-trivial element of $[\rp, \St]$) because its absolute degree is congruent to $1 \bmod{2}$.
\item Let $P_{1},P_{2}\in \St$ be such that $P_1\neq \pm P_2$, let $\mathcal{C}$ be the (unique) great circle that passes through $P_{1}$ and $P_{2}$,
and let $x\in \St$ be such that $\overline{x}\in \operatorname{\text{Coin}}(W_{P_1}, W_{P_2})$. Then either $x\in \operatorname{\text{Coin}}(U_{P_1}, U_{P_2})$ or $x\in \operatorname{\text{Coin}}(U_{P_1}, A\circ U_{P_2})$. Let $i\in \brak{1,2}$. Observe that any two great circles of $\St$ either coincide, or intersect in exactly two (antipodal) points, and that $U_{P_{i}}$ maps every great circle that passes through $P_{i}$ to itself. Suppose first that $x\notin \mathcal{C}$, and let $\mathcal{C}_{i}$ be the great circle that passes through $P_{i}$ and $x$. Then $\mathcal{C}_{1}\cap \mathcal{C}_{2}=\brak{x,A(x)}$. Since $U_{P_{i}}(x)\in \mathcal{C}_{i}$ and $x\in \operatorname{\text{Coin}}(U_{P_1}, U_{P_2}) \cup \operatorname{\text{Coin}}(U_{P_1}, A\circ U_{P_2})$, it follows that $U_{P_1}(x)\in \brak{x,A(x)}$, and so $p\circ U_{P_{1}}(x)=W_{P_{1}}(\overline{x})=\overline{x}\in \mathcal{C}$. By part~(\ref{it:prinWea}), this implies that $\overline{x}=\overline{P}_{1}$, which yields a contradiction since $x\notin \mathcal{C}$. So assume that $x\in \mathcal{C}$. We write the elements of $\mathcal{C}$ in exponential form, taking $P_{1}$ to be $e^{i\pi/2}$. Let $\rho$ be the oriented angle $\widehat{P_{1}P_{2}}$, and let $x=e^{i\varphi}$. Then $U_{P_{1}}(x)=e^{(2\varphi-\frac{\pi}{2})i}$ and $U_{P_{2}}(x)=e^{(2(\varphi-\rho)-\frac{\pi}{2})i+\rho i}= U_{P_{1}}(x) \ldotp e^{-\rho i}$. Since $x\in \operatorname{\text{Coin}}(U_{P_1}, U_{P_2}) \cup \operatorname{\text{Coin}}(U_{P_1}, A\circ U_{P_2})$, we see that $\rho\in \brak{0,\pi}$, but this implies that $P_{1}\in \brak{P_{2},-P_{2}}$, which yields a contradiction. We conclude that $W_{P_1}$ and $W_{P_2}$ are coincidence free.
\item Suppose on the contrary that the pair $(W_P, c_0)$ can be deformed to a pair of coincidence-free self-maps of $\rp$. By~\cite{Broo},
there exists a map $h\colon\thinspace \rp \to \rp$ that is homotopic to $W_{P}$ such that the pair $(h,c_{0})$ is coincidence free, and hence $h$ is non surjective. The maps $h$ and $c_{0}$ lift to maps $\widetilde{h}, \widetilde{c}_{0}\colon\thinspace \rp \to \St$, where $\widetilde{c}_{0}$ is also a constant map, and the non surjectivity of $h$ implies that of $\widetilde{h}$. Thus $\widetilde{h}$ is null homotopic, but then so is $h$, which yields a contradiction because $h$ is homotopic to $W_{P}$, and $W_{P}$ is non-null homotopic by part~(\ref{it:prinWeb}).\qedhere
\end{enumerate}
 \end{proof}
 
In the following proposition, we describe the set $[\rp, F_n(\rp)]$ (resp.\ $[\rp, D_n(\rp)]$) of homotopy classes of maps between $\rp$ and $F_{n}(\rp)$ (resp.\ $D_{n}(\rp)$),  and the set of homotopy classes of $n$-valued maps of $\rp$, from which we will see that they each contain two elements. 
Let $N=(0,0,1)\in \St$. As we mentioned in the proof of \relem{rp2split}, any $n$-valued map $\phi\colon\thinspace \rp \multimap \rp$ is split, and so the set of homotopy classes of $n$-valued maps of $\rp$ is equal to $\splitmap{\rp}{\rp}{n}/\!\sim$, where $\sim$ denotes the homotopy equivalence relation in $\splitmap{\rp}{\rp}{n}$.

 

\begin{prop}\label{prop:classmap} 
Let $n\geq 2$. 

\begin{enumerate}[(a)]
\item\label{it:classmap0} The sets $[\rp, F_n(\rp)]$, $[\rp, D_n(\rp)]$ and $\splitmap{\rp}{\rp}{n}/\!\sim$ are in bijection, and each possesses two elements.

\item\label{it:classmap1} The two homotopy classes of $n$-valued maps of $\rp$ may be described as follows:

\begin{enumerate}[(i)]
\item\label{it:classmapa} the first homotopy class consists of those $n$-valued maps $\phi\colon\thinspace \rp \multimap \rp$ such that any lift $\widehat{\Phi} \colon\thinspace \rp \to F_n(\rp)$ of $\phi$ induces the trivial homomorphism   
on the level of fundamental groups, 
and is homotopic to the constant map between $\rp$ and $F_{n}(\rp)$.
\item\label{it:classmapb} if $\phi_{n}\colon\thinspace \rp \multimap \rp$ is an $n$-valued map of $\rp$ that represents the second homotopy class, and $\widehat{\Phi}_n\colon\thinspace \rp \to F_n(\rp)$ is a lift of $\phi_n$, then for all $i=1,\ldots, n$, the composition of $\widehat{\Phi}_n$ with the projection $p_{i}\colon\thinspace F_n(\rp) \to \rp$
onto the $i\up{th}$ coordinate is homotopic to the map $W_{N}\colon\thinspace  \rp \to \rp$. Moreover, for all $i=1,\ldots, n+1$, the composition of $\widehat{\Phi}_{n+1}$ with the projection $q_{i}\colon\thinspace F_{n+1}(\rp) \to F_n(\rp)$ 
given by forgetting the $i\up{th}$ coordinate is homotopic to $\widehat{\Phi}_{n}$. 
\end{enumerate}
\end{enumerate}
\end{prop}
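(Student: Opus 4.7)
The plan is to identify the set of homotopy classes of $n$-valued maps of $\rp$ with $[\rp, F_n(\rp)]$ and to show that the latter has exactly two elements. By \reth{metriccont}(\ref{it:metriccontb}), this set is in bijection with $[\rp, D_n(\rp)]$, and by \relem{rp2split} every $n$-valued map of $\rp$ is split, so it coincides with $\splitmap{\rp}{\rp}{n}/\!\sim$. Every map $\Phi\colon\thinspace \rp\to D_n(\rp)$ thus lifts to some $\widehat{\Phi}\colon\thinspace \rp\to F_n(\rp)$, and any two lifts of a given $\Phi$ differ by the coordinate $S_n$-action; since $F_n(\rp)$ is path-connected, this action can be realised by a homotopy inside $F_n(\rp)$, so the projection $[\rp, F_n(\rp)] \to [\rp, D_n(\rp)]$ is a bijection. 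Part~(\ref{it:classmap0}) then reduces to showing that $\lvert [\rp, F_n(\rp)] \rvert = 2$.

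For the lower bound, I would construct the two representatives required by part~(\ref{it:classmap1}). Pick distinct $y_1,\ldots,y_n\in \rp$ and take $\widehat{\Phi}_0$ to be the constant map at $(y_1,\ldots,y_n)\in F_n(\rp)$; its associated $n$-valued map $\brak{c_{y_1},\ldots,c_{y_n}}$ fulfils the description in part~(\ref{it:classmap1})(\ref{it:classmapa}). Then pick $P_1,\ldots,P_n\in \St$ with $P_i\neq \pm P_j$ whenever $i\neq j$, and set $\widehat{\Phi}_n(\overline{x})=(W_{P_1}(\overline{x}),\ldots,W_{P_n}(\overline{x}))$. By \relem{prinWe}(\ref{it:prinWed}) the image really lies in $F_n(\rp)$, and by \relem{prinWe}(\ref{it:prinWeb}) each coordinate $p_i\circ \widehat{\Phi}_n = W_{P_i}$ is homotopic to $W_N$. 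These two representatives are non-homotopic, because post-composing with any $p_i$ yields a null-homotopic map in the first case and a non-null-homotopic one in the second.

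For the upper bound, let $\widehat{\Phi}=(f_1,\ldots,f_n)\colon\thinspace \rp\to F_n(\rp)$ be arbitrary. First, I would show that every $f_i$ induces the trivial homomorphism on $\pi_1(\rp)$: if some $f_i$ were $\pi_1$-non-trivial, it would be homotopic to a self-homotopy equivalence of $\rp$, and a standard Nielsen coincidence argument (using the Lefschetz fixed point theorem, which gives $L(g)=1$ for every self-map of $\rp$) would force $(f_i,f_j)$ to have coincidences for every $j\neq i$, contradicting $f_i(x)\neq f_j(x)$. Consequently, each $f_i$ lies in one of the two $\pi_1$-trivial classes in $[\rp,\rp]$, namely that of the constant map or that of $W_N$. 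By \relem{prinWe}(\ref{it:prinCoin}), no pair can be coincidence-free if one member lies in each class, so either every $f_i$ is constant---in which case path-connectedness of $F_n(\rp)$ gives $\widehat{\Phi}\simeq \widehat{\Phi}_0$---or every $f_i$ is homotopic to some $W_{P_i}$. In the second case, the component-wise homotopies of the $f_i$ to the $W_{P_i}$ can be promoted to a global homotopy in $F_n(\rp)$ from $\widehat{\Phi}$ to $\widehat{\Phi}_n$ by an obstruction-theoretic argument based on the cellular structure of $\rp$. The second assertion of part~(\ref{it:classmap1})(\ref{it:classmapb}) is then automatic, since $q_i\circ\widehat{\Phi}_{n+1}$ has $n$ coordinates each homotopic to $W_N$, so represents the non-trivial class of $[\rp, F_n(\rp)]$ and is therefore homotopic to $\widehat{\Phi}_n$. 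The main difficulty is this upper bound: ruling out $\pi_1$-non-trivial coordinates via coincidence theory, and upgrading coordinate-wise homotopies into a homotopy inside the configuration space $F_n(\rp)$, both demand careful use of the topology of $F_n(\rp)$ beyond its fundamental group.
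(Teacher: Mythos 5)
There is a genuine gap at the central step. The whole content of part~(\ref{it:classmap0}) is the count $\lvert[\rp,F_n(\rp)]\rvert=2$, and your upper bound rests on the assertion that coordinate-wise homotopies $f_i\simeq W_{P_i}$ (or $f_i\simeq c_{y_i}$) can be ``promoted to a global homotopy in $F_n(\rp)$ by an obstruction-theoretic argument''. That promotion is precisely what has to be proved: the map $[\rp,F_n(\rp)]\to\prod_{i=1}^n[\rp,\rp]$ induced by the projections $p_i$ has no a priori reason to be injective, and the obstruction to deforming the coordinates simultaneously while keeping them pairwise disjoint lives in $\pi_1(F_n(\rp))=P_n(\rp)$. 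The paper resolves exactly this point by a different route: the Barratt--Puppe sequence of the cofibration $\St[1]\stackrel{2}{\to}\St[1]\to\rp\to\St$, together with $\pi_2(F_n(\rp))=0$, identifies $[\rp,F_n(\rp)]$ with the set of elements of order at most $2$ in $P_n(\rp)$, which is $\brak{1,\ft}$ by the known torsion structure of $P_n(\rp)$. Your coordinate-wise analysis (each $f_i$ is $\pi_1$-trivial, hence constant or $\simeq W_N$, and \relem{prinWe}(\ref{it:prinCoin}) forbids mixing the two types) is a nice reduction, but without the braid-group computation it cannot close; as written the argument assumes what it sets out to show.

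Two secondary problems. First, your justification that $\widehat{\pi}\colon[\rp,F_n(\rp)]\to[\rp,D_n(\rp)]$ is injective --- that the $S_n$-action on lifts ``can be realised by a homotopy inside $F_n(\rp)$ since $F_n(\rp)$ is path-connected'' --- is not valid: path-connectedness does not make the coordinate-permuting homeomorphism homotopic to the identity. The paper instead establishes injectivity \emph{after} the count, by exhibiting two classes in $[\rp,D_n(\rp)]$ distinguished by whether the induced homomorphism to $B_n(\rp)$ is trivial or injective, and it handles $\splitmap{\rp}{\rp}{n}/\!\sim$ via the $S_n$-orbit structure on a two-element set. Second, your claim that a $\pi_1$-non-trivial coordinate $f_i$ ``would be homotopic to a self-homotopy equivalence of $\rp$'' is unjustified ($[\rp,\rp]$ contains many $\pi_1$-non-trivial classes that are not equivalences); in the paper this issue never arises, since $\pi_1$-triviality of every $p_i\circ\widehat{\Phi}$ follows from the fact that the image of $\widehat{\Phi}_{\#}$ lies in $\ang{\ft}$ and $p_{i\#}(\ft)=1$.
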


\begin{proof}
Let $n\geq 2$. 
\begin{enumerate}[(a)]
\item We start by showing that the set of homotopy classes $[\rp,F_{n}(\rp)]$ of $n$-ordered maps of $\rp$ has two elements. 
Consider the following Barratt-Puppe sequence:
\begin{equation}\label{eq:bps}
\ldots  \to [\St, F_n(\rp)] \to [\rp, F_n(\rp)] \to [\St[1], F_n(\rp)] \to [\St[1], F_n(\rp)]
\end{equation}
associated with the cofibration sequence $\St[1] \stackrel{2}\to \St[1] \to \rp \to \St \to \St \to \ldots$ for the space $F_n(\rp)$, 
 where the map $[\St[1], F_n(\rp)] \to [\St[1], F_n(\rp)]$ sends $[\beta]$ to $[\beta^2]$ for all maps $\beta\colon\thinspace \St[1] \to F_n(\rp)$. Now $[\St, F_n(\rp)]$ consists of a single homotopy class because $\pi_2(F_n(\rp))=0$~\cite[Corollary, p.~244]{FvB}. Therefore~\reqref{bps} implies that the set $[\rp, F_n(\rp)]$ is in one-to-one correspondence with the set of elements of $\pi_1(F_n(\rp))=P_n(\rp)$ of order less than or equal to $2$, namely the trivial element and the full twist $\ft$~\cite[Proposition~23]{GG3}. Let $\alpha\colon\thinspace \rp \to F_{n}(\rp)$ be an $n$-ordered map of $\rp$ whose homotopy class $[\alpha]$ corresponds to $\ft$. By~\reqref{bps}, the image of $[\alpha]$ in $[\St[1], F_n(\rp)]$ is non trivial, and so the induced homomorphism $\alpha_{\#}\colon\thinspace \pi_{1}(\rp)\to P_{n}(\rp)$ is non trivial (and injective). In particular, $\alpha$ is non-homotopic to the constant map $c\colon\thinspace \rp \to F_{n}(\rp)$, from which we conclude that $[\rp,F_{n}(\rp)]=\brak{[c],[\alpha]}$ has two distinct elements.   

We now prove that there are bijections between $[\rp,F_{n}(\rp)]$, $[\rp,D_{n}(\rp)]$ and $\splitmap{\rp}{\rp}{n}/\!\sim$.
As we pointed out in the proof of~\cite[Lemma~9(b)]{GG15}, the covering map $\pi\colon\thinspace F_n(\rp) \to D_n(\rp)$ induces a map $\widehat{\pi}:[\rp, F_n(\rp)] \to [\rp, D_n(\rp)]$ defined by $\widehat{\pi}([\Phi])=[\pi\circ \Phi]$ for all $\Phi\in F_{n}(\rp)^{\rp}$. If $\Psi\colon\thinspace \rp \to D_n(\rp)$ is a map and $\psi\colon\thinspace \rp \multimap \rp$ is the associated $n$-valued map, then $\psi$ is split by~\cite[Lemma~14]{GG15}. It follows that there exists a lift $\widehat{\Psi}\colon\thinspace \rp \to F_{n}(\rp)$ of $\psi$, and this map satisfies $\pi\circ \widehat{\Psi}=\Psi$. In particular, $\widehat{\pi}$ is surjective, and hence $[\rp,D_{n}(\rp)]=\brak{[\pi\circ c],[\pi\circ \alpha]}$ has at most two elements. Since $\pi_{\#}\colon\thinspace P_{n}(\rp)\to B_{n}(\rp)$ is inclusion, $(\pi\circ \alpha)_{\#} \colon\thinspace \pi_{1}(\rp) \to B_{n}(\rp)$ is injective, and thus $[\pi\circ c]\neq [\pi\circ \alpha]$. Therefore $\widehat{\pi}$ is a bijection.

Finally, by~\cite[Lemma~9(b)]{GG15}, the set $\splitmap{\rp}{\rp}{n}/\!\sim$ is in one-to-one correspondence with the orbits of the set $[\rp, F_n(\rp)]$ under the action of $S_{n}$ induced by that of $S_{n}$ on $F_{n}(\rp)^{\rp}$. But $[\rp,F_{n}(\rp)]=\brak{[c],[\alpha]}$, and since the orbit of the homotopy class $[c]$ of the constant map under the action of $S_{n}$ must be $\brak{[c]}$, the orbit of $[\alpha]$ must be $\brak{[\alpha]}$. It follows that $\splitmap{\rp}{\rp}{n}/\!\sim$ has precisely two elements.

\item\begin{enumerate}[(i)]
\item If $\phi\colon\thinspace \rp \multimap \rp$ is an $n$-valued map of $\rp$ such that the associated map $\Phi\colon\thinspace \rp \to D_{n}(\rp)$ belongs to the homotopy class $[\pi\circ c]$ in $[\rp,D_{n}(\rp)]$ of the constant map $c\colon\thinspace \rp \to F_{n}(\rp)$, then it follows from the proof of part~(\ref{it:classmap0}) that any lift $\widehat{\Phi}\colon\thinspace \rp \to F_n(\rp)$ of $\phi$ is homotopic to the constant map, and induces the trivial homomorphism on the level of fundamental groups.

\item Let $\phi_{n}\colon\thinspace \rp \multimap \rp$ be an $n$-valued map of $\rp$ that represents the second (non-trivial) homotopy class, and let $\widehat{\Phi}_n\colon\thinspace \rp \to F_n(\rp)$ be a lift of $\phi_n$, so that $\widehat{\Phi}_n$ is homotopic to the map $\alpha$ given in the proof of part~(\ref{it:classmap0}). For all $1\leq i \leq n$, the image of $\ft$ under the homomorphism induced by the projection $p_{i}\colon\thinspace F_n(\rp) \to \rp$ is trivial (this follows from the proof of~\cite[Proposition~8]{GG14} using the fact that $\ft$ may be written as a product of the generators $(A_{i,j})_{1\leq i,j\leq n}$ given in that paper). Since the image of the induced homomorphism $\alpha_{\#}\colon\thinspace \pi_{1}(\rp) \to P_n(\rp)$ is equal to $\ang{\ft}$, the composition $p_{i}\circ \widehat{\Phi}_{n}\colon\thinspace \rp \to \rp$ induces the trivial homomorphism on the level of fundamental groups. Now let $P_1,\ldots,P_n$ be $n$ distinct points of $\St$ that lie on the geodesic arc between $(1,0,0)$ and $(0,0,1)$, and consider the map $(W_{P_1},\ldots, W_{P_n})\colon\thinspace  \rp \to F_n(\rp)$. The fact that this map is well defined is a consequence of \relem{prinWe}(\ref{it:prinWed}). Since $p_i\circ (W_{P_1},\ldots, W_{P_n})=W_{P_i}$ for all $i=1,\ldots,n$, the map $(W_{P_1},\ldots, W_{P_n})$ is not homotopic to the constant map $c$, and so it is homotopic to $\alpha$ by the proof of part~(\ref{it:classmap0}). In particular, $\widehat{\Phi}_n$ is homotopic to $(W_{P_1},\ldots, W_{P_n})$, and the statements of part~(\ref{it:classmap1})(\ref{it:classmapb}) then follow.
\qedhere
%
\end{enumerate}
\end{enumerate}
 \end{proof} 
 
 We are now able to prove the Wecken property for $\rp$.


\begin{proof}[Proof of \reth{wecproj}]
As we mentioned previously, $\rp$ has the Wecken property for self-maps. So suppose that $n>1$. From \repr{classmap}, there are two homotopy classes of $n$-valued maps of $\rp$, and so by \relem{rp2split},
it suffices to show that each of these classes admits a representative for which the number of fixed points is equal to $n$.
%
Let $P_1,\ldots,P_n$ be as in the proof of \repr{classmap}(\ref{it:classmap1})(\ref{it:classmapb}), and let $c_{P_{i}}\colon\thinspace \rp \to \rp$ be the constant map at $P_{i}$. From \repr{classmap}, the two homotopy classes of $n$-valued maps of $\rp$ contain an $n$-valued map $\phi\colon\thinspace \rp \multimap \rp$ that is split and admits a lift $\widehat{\Phi}=(\phi_{1},\ldots,\phi_{n})\colon\thinspace \rp \to F_{n}(\rp)$, where either $\phi_{i}=c_{\overline{P_{i}}}$ is the constant map at $\overline{P_{i}}$ for all $i=1,\ldots,n$, or $\phi_{i}=W_{P_{i}}$ for all $i=1,\ldots,n$. Using \relem{prinWe}(\ref{it:prinWea}), $\operatorname{\text{Fix}}(\phi_{i})=\brak{\overline{P_{i}}}$ for all $i=1,\ldots,n$, and hence $\phi$ has exactly $n$ fixed points.
Moreover, by \relem{rp2split} we have 
$N(\phi)=n$.
So each of the two homotopy classes of $n$-valued maps contains a representative $\phi$ for which $N(\phi)=n$, and hence $\rp$ has the Wecken property for $n$-valued maps.
\end{proof}

\section{Nielsen numbers of $n$-valued maps}\label{sec:calcu}

The Nielsen number for $n$-valued maps of a compact polyhedron $X$ was defined in~\cite{Sch1}, and may be determined for split $n$-valued maps using \reth{helgath0}. The aim of this section is to prove \reth{form}, where we give a formula for the Nielsen number of non-split $n$-valued maps of a space $X$
that is in the same spirit as that of \reth{helgath0}.
For 
\reth{form}, we shall require $X$ to be a compact, orientable manifold without boundary, in order to have the notions of index, Lefschetz and Nielsen numbers for coincidences of pairs of maps from finite coverings of $X$ to $X$ at our disposal. However, many of the results that lead to \reth{form} are valid under weaker hypotheses on $X$, namely those of 
\repr{nielsen0} below.

We start by recalling some notation and results from~\cite[Section~3.2]{GG15} that will be used throughout the rest of the paper. Given an $n$-valued map $\phi\colon\thinspace  X \multimap X$ of a topological space $X$ that is locally path-connected and semi-locally simply connected, we consider the corresponding map $\Phi\colon\thinspace X \to D_n(X)$, and the induced homomorphism $\Phi_{\#}\colon\thinspace \pi_1(X) \to \pi_1(D_n(X))$ on the level of fundamental groups, where $\pi_1(D_n(X))=B_n(X)$. By the short exact sequence
\begin{equation*}
1\to P_n(X) \to B_n(X) \stackrel{\tau}{\to} S_n \to 1,
\end{equation*}
$P_n(X)$ is a normal subgroup of $B_n(X)$ of finite index $n!$, so the subgroup $H=\Phi_{\#}^{-1}(P_n(X))$ is a normal subgroup of $\pi_1(X)$ of finite index. Let $L$ be the finite quotient group $\pi_{1}(X)/H$, and let $q\colon\thinspace \widehat{X} \to X$ be the covering of $X$ that corresponds to the subgroup $H$. Such a covering exists due to the hypotheses on $X$. As the following proposition shows, the fixed points of $\phi$ may be described in terms of the coincidences of $q$ with the coordinate maps $f_{1},\ldots, f_{n}\colon\thinspace \widehat{X}\to X$ of a lift of the $n$-valued map $\phi\circ q \colon\thinspace \widehat{X}\multimap X$.

\begin{prop}\cite[Propositions~16 and~17]{GG15}\label{prop:nielsen0}
Let $n\in \N$, and suppose that $X$ is a connected, locally arcwise-connected metric space. 
\begin{enumerate}[(a)]
\item\label{it:nielsen} With the above notation, the $n$-valued map $\phi_1=\phi \circ q\colon\thinspace  \widehat{X}\multimap X$ admits exactly $n!$ lifts, which are  $n$-ordered  maps from $\widehat{X}$ to $F_n(X)$. If one such lift $\widehat{\Phi}_{1}\colon\thinspace \widehat{X}\to F_n(X)$ is given by $\widehat{\Phi}_{1}=(f_1,\ldots, f_n)$, where for $i=1,\ldots,n$, $f_i$ is a map from $\widehat{X}$ to $X$, then the other lifts are of the form $(f_{\rho(1)},\ldots,f_{\rho(n)})$, where $\rho\in S_n$.  
\item\label{it:coinfix} if the lift $\widehat{\Phi}_{1}=(f_1,\ldots, f_n)$ is as in part~(\ref{it:nielsen}) then 
the restriction of $q\colon\thinspace \widehat{X} \to X$ to $\bigcup_{i=1}^{n} \operatorname{\text{Coin}}(q, f_i) \to \operatorname{\text{Fix}}(\phi)$ is surjective. Furthermore, the pre-image of a point $x\in \operatorname{\text{Fix}}(\phi)$ by this map is precisely $q^{-1}(x)$, namely the fibre over $x\in X$ of the covering map $q$.
\end{enumerate}
\end{prop}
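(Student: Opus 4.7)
The two parts of \repr{nielsen0} can be handled independently: part (\ref{it:nielsen}) is a direct application of covering-space theory, while part (\ref{it:coinfix}) follows from unpacking the associated-map correspondence.

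For part (\ref{it:nielsen}), the plan is to reformulate the problem as a lifting question: an $n$-ordered lift $\widehat{\Phi}_{1}\colon\thinspace\widehat{X}\to F_{n}(X)$ of the $n$-valued map $\phi_{1}=\phi\circ q$ is precisely a lift of the composite map $\Phi\circ q\colon\thinspace\widehat{X}\to D_{n}(X)$ through the regular $n!$-fold covering $\pi\colon\thinspace F_{n}(X)\to D_{n}(X)$. The standard lifting criterion then asks whether $(\Phi\circ q)_{\#}(\pi_{1}(\widehat{X}))\subset\pi_{\#}(\pi_{1}(F_{n}(X)))=P_{n}(X)$, and this holds by construction, since $q_{\#}(\pi_{1}(\widehat{X}))=H=\Phi_{\#}^{-1}(P_{n}(X))$, so $\Phi_{\#}\circ q_{\#}(\pi_{1}(\widehat{X}))\subset P_{n}(X)$. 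The hypotheses on $X$ guarantee that $\widehat{X}$ is connected and locally path-connected, so lifts exist. Since $\pi$ is regular with deck group $S_{n}$ acting freely on $F_{n}(X)$ by permuting coordinates, once one lift is fixed, all other lifts are obtained from it by post-composition with a deck transformation, giving exactly $\lvert S_{n}\rvert=n!$ distinct lifts. Writing any one such lift as $(f_{1},\ldots,f_{n})$, the others are exactly $(f_{\rho(1)},\ldots,f_{\rho(n)})$ for $\rho\in S_{n}$, as claimed.

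For part (\ref{it:coinfix}), the argument is essentially a two-way inclusion using the key identity $\phi(q(y))=\phi_{1}(y)=\brak{f_{1}(y),\ldots,f_{n}(y)}$, which holds for every $y\in\widehat{X}$ by the very definition of the lift. If $y\in\operatorname{\text{Coin}}(q,f_{i})$ for some $i\in\brak{1,\ldots,n}$, then $q(y)=f_{i}(y)\in\brak{f_{1}(y),\ldots,f_{n}(y)}=\phi(q(y))$, so $q(y)\in\operatorname{\text{Fix}}(\phi)$, which gives the inclusion $q\bigl(\bigcup_{i=1}^{n}\operatorname{\text{Coin}}(q,f_{i})\bigr)\subset\operatorname{\text{Fix}}(\phi)$. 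Conversely, for $x\in\operatorname{\text{Fix}}(\phi)$ and any $y\in q^{-1}(x)$, the identity $x=q(y)\in\phi(x)=\brak{f_{1}(y),\ldots,f_{n}(y)}$ produces an index $i$ with $q(y)=f_{i}(y)$, \emph{i.e.}\ $y\in\operatorname{\text{Coin}}(q,f_{i})$. Thus $q^{-1}(x)\subset\bigcup_{i=1}^{n}\operatorname{\text{Coin}}(q,f_{i})$, which simultaneously shows surjectivity of the restriction of $q$ onto $\operatorname{\text{Fix}}(\phi)$ and pins down the pre-image of $x$ as exactly $q^{-1}(x)$.

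I do not expect any serious obstacle: the whole proposition is a careful bookkeeping of the covering-theoretic set-up recalled just before the statement, together with the defining relationship between $\phi$ and its lift. The only points requiring a little care are verifying the inclusion $\Phi_{\#}(q_{\#}(\pi_{1}(\widehat{X})))\subset P_{n}(X)$, which is immediate from the definition of $H$, and correctly matching the $S_{n}$-action on the set of lifts with the permutation action on the coordinates of $F_{n}(X)$.
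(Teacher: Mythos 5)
Your proof is correct: part~(a) is the standard lifting-criterion argument using $q_{\#}(\pi_1(\widehat{X}))=H=\Phi_{\#}^{-1}(P_n(X))$ together with the simply transitive action of the deck group $S_n$ of the regular covering $\pi$, and part~(b) is the right two-way inclusion via the identity $\phi(q(y))=\brak{f_1(y),\ldots,f_n(y)}$. The paper itself gives no proof here (the result is quoted from \cite[Propositions~16 and~17]{GG15}), but your argument is exactly the one that the configuration-space framework set up in \resec{calcu} is designed to yield, so there is nothing to add.
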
 

Although the lift $\widehat{\Phi}_1$ is not unique, the set $\brak{f_{1},\ldots,f_{n}}$ is, and so the set $\bigcup_{i=1}^{n} \operatorname{\text{Coin}}(q, f_i)$ is independent of the choice of lift of $\phi$. In what follows, we aim to describe the Nielsen classes of $\phi$ in terms of the Nielsen coincidence classes of the pairs $(q, f_i)$, where $i=1,\ldots,n$, which will lead to a formula for $N(\phi)$ similar in spirit to that of \reth{helgath0}.
Observe that the composition $\pi_1(X) \stackrel{\Phi_{\#}}{\to} B_n(X) \stackrel{\tau}{\to} S_n$ 
is a homomorphism whose kernel is $H$,
so it induces an injective homomorphism $\Gamma\colon\thinspace L \to S_n$. Let $L'=\im{\Gamma}=\im{\tau \circ \Phi_{\#}}$, and for $i,j=1,\ldots,n$, let $L_{i,j}'=\setr{\rho\in L'}{\rho(i)=j}$. The subset $L_{i,i}'$ is a subgroup of $L'$, and if $i,j\in \brak{1,\ldots,n}$, the subset $L'_{i,j}$ is either empty or is a left coset of $L'_{i,i}$ in $L'$.

In the rest of this section, we will suppose without further comment that $X$ satisfies the hypotheses of \repr{nielsen0}, so that it is a connected, locally arcwise-connected metric space. If $\phi\colon\thinspace X \multimap X$ is an $n$-valued map, 
we recall the Nielsen relation on $\operatorname{\text{Fix}}(\phi)$, the index of a Nielsen fixed point class of $\phi$ and the definition of the Nielsen number $N(\phi)$ of $\phi$ from~\cite[Section~5]{Sch1}. For the definition of index, using~\cite[Theorem~6]{Sch0}  and the homotopy invariance of the Nielsen number~\cite[Theorem 6.5]{Sch0}, without loss of generality, we may restrict ourselves to the case where $\operatorname{\text{Fix}}(\phi)$ is finite. First note that by~\cite[Lemma~12]{GG15}, if $\lambda\colon\thinspace I \to X$ is a path then the $n$-valued map $\phi \circ \lambda\colon\thinspace I \multimap X$ is split. Let $x,x'\in \operatorname{\text{Fix}}(\phi)$. We say that $x$ and $x'$ are Nielsen equivalent if there exist maps $g_1,g_2,\ldots,g_n \colon\thinspace I \to X$, a path $\lambda\colon\thinspace I \to X$ from $\lambda(0)=x$ to $\lambda(1)=x'$ and $j\in \brak{1,\ldots, n}$ such that $\phi \circ \lambda =\brak{g_1,g_2,\ldots,g_n}$, and $g_{j}$ is a path from $g_j(0)=x$ to $g_j(1)=x'$ that is homotopic to $\lambda$ relative to their endpoints. This defines an equivalence relation on $\operatorname{\text{Fix}}(\phi)$, and the resulting equivalence classes are called \emph{Nielsen fixed point classes} of the $n$-valued map $\phi$. 

To define the index of an isolated point $x$ in $\operatorname{\text{Fix}}(\phi)$,
we suppose that $X$ is a compact polyhedron. Following~\cite[Section~3]{Sch1},
let $x$ be in the interior of a maximal simplex $\overline{\sigma}$. By~\cite[Splitting~Lemma~2.1]{Sch1}
$\phi\left\lvert_{\overline{\sigma}}\right.$ is split, so may be written in the form
 $\phi\left\lvert_{\overline{\sigma}}\right.=\brak{f_1,\ldots,f_n}$,
where $x \in \operatorname{\text{Fix}}(f_j)$ for some (unique) $1\leq j\leq n$. We then define $\operatorname{\text{Ind}}(\phi, x)=\operatorname{\text{Ind}}(f_j, x)$, where the right-hand side is the usual fixed point index (see~\cite[Sections~3 and~5]{Sch1} for more details).  As in the single-valued case, 
the \emph{index} of a Nielsen fixed point class of $\phi$ that contains a  finite number of points is the sum of the indices of these fixed points, and such a fixed point class is said to be \emph{essential} if its index is non zero. The \emph{Nielsen number} $N(\phi)$ is defined to be the number of essential Nielsen fixed point classes of $\phi$.


\begin{rem}\label{rem:splitpath}
Within our framework, the maps $g_1,g_2,\ldots,g_n$ may be chosen as follows:  given $x_0,x_0'\in \operatorname{\text{Fix}}(\phi)$, a point $\widetilde{x}_{0}\in \widehat{X}$ such that $q(\widetilde{x}_{0})=x_{0}$, and a path $\lambda\colon\thinspace I \to X$ from $x_0$ to $x_0'$,
let $\widetilde{\lambda}\colon\thinspace I \to \widehat{X}$ be the unique lift of $\lambda$ to $\widehat{X}$ for which $\widetilde{\lambda}(0)=\widetilde{x}_{0}$.  Consider the $n$-ordered map $\widehat{\Phi}_1=(f_1,\ldots,f_n)\colon\thinspace \widehat{X} \to F_{n}(X)$ given by \repr{nielsen0}(\ref{it:nielsen}). Then $\phi \circ \lambda=\brak{g_1,\ldots,g_n}$, where for $i=1,\ldots,n$, $g_{i}\colon\thinspace I \to X$ is the map defined by
$g_i=f_i\circ \widetilde \lambda$. 
So  $x_0$ and $x_0'$ are Nielsen equivalent if there is a path $\lambda$ as above and $j\in \brak{1,\ldots,n}$ such that 
$\lambda(0)=g_j(0)=f_j\circ \widetilde \lambda(0)$, $\lambda(1)=g_j(1)=f_j\circ \widetilde \lambda(1)$ and $\lambda$ is homotopic to $g_j$ relative to their endpoints.
\end{rem}

In the following lemmas, we will compare the coincidences of $q$ and the $f_i$ with the fixed points of $\phi$.

\begin{lem}\label{lem:auxil0}  
With the above notation, let $x_0\in \operatorname{\text{Fix}}(\phi)$, let $\widetilde{x}_{0}\in \widehat{X}$ and $i\in \brak{1,\ldots,n}$ be such that $q(\widetilde{x}_{0})=x_0$ and $\widetilde{x}_{0}\in \operatorname{\text{Coin}}(q,f_i)$. If $y\in q^{-1}(x_0)$ and $j\in \brak{1,\ldots,n}$
then $y \in \operatorname{\text{Coin}}(q, f_j)$ if and only if $\Gamma(\alpha)\in L'_{i,j}$, where $\alpha$ is the $H$-coset of $[q(\gamma)]$, $\gamma$ being any path from $\widetilde{x}_{0}$ to $y$. In particular, the points of $q^{-1}(x_0)$ that belong to $ \operatorname{\text{Coin}}(q,f_i)$ are in one-to-one  correspondence with 
the subgroup $L'_{i,i}$. 
\end{lem}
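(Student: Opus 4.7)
The plan is to combine the torsor structure of the fibre $q^{-1}(x_0)$ over $L$ with the string-permutation interpretation of $\tau$ applied to the canonical lift $\widehat{\Phi}_1\circ\gamma$ of the loop $\Phi\circ q\circ\gamma$. First I would check that the assignment $y\mapsto\alpha$, where $\alpha$ is the $H$-coset of $[q\circ\gamma]$ for a path $\gamma$ from $\widetilde{x}_0$ to $y$, does not depend on $\gamma$ and defines a bijection $q^{-1}(x_0)\to L$. Well-definedness is immediate because two such paths differ by a loop in $\widehat{X}$ based at $\widetilde{x}_0$ whose image under $q$ lies in $H=q_{\#}(\pi_1(\widehat{X},\widetilde{x}_0))$, and bijectivity is the classical description of the fibre of the regular covering $q$ as a torsor under $\pi_1(X)/H=L$.

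The central step is to analyse the path $\widehat{\Phi}_1\circ\gamma\colon\thinspace I\to F_n(X)$. Since $\pi\circ \widehat{\Phi}_1=\Phi\circ q$, this path is the unique lift of the loop $\Phi\circ q\circ\gamma$ along $\pi\colon\thinspace F_n(X)\to D_n(X)$ starting at $\widehat{\Phi}_1(\widetilde{x}_0)=(f_1(\widetilde{x}_0),\ldots,f_n(\widetilde{x}_0))$, and it ends at $\widehat{\Phi}_1(y)=(f_1(y),\ldots,f_n(y))$. The homotopy class of $\Phi\circ q\circ\gamma$ in $B_n(X)=\pi_1(D_n(X))$ equals $\Phi_{\#}([q\circ\gamma])$, and its image under $\tau$ is by definition $\Gamma(\alpha)$. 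Setting $\sigma=\Gamma(\alpha)$, the defining property of $\tau$ as the homomorphism classifying the regular $S_n$-covering $\pi$ then yields the coordinate identity $f_j(y)=f_{\sigma^{-1}(j)}(\widetilde{x}_0)$ for every $j\in\{1,\ldots,n\}$ (with the convention adopted for $\tau$).

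The coincidence condition translates directly. Because $\widehat{\Phi}_1(\widetilde{x}_0)\in F_n(X)$, the points $f_1(\widetilde{x}_0),\ldots,f_n(\widetilde{x}_0)$ are pairwise distinct, so the index $i$ such that $f_i(\widetilde{x}_0)=q(\widetilde{x}_0)=x_0$ is the unique $k$ satisfying $f_k(\widetilde{x}_0)=x_0$. The condition $y\in\operatorname{\text{Coin}}(q,f_j)$, that is $f_j(y)=q(y)=x_0$, becomes $f_{\sigma^{-1}(j)}(\widetilde{x}_0)=x_0$, which by uniqueness of $i$ forces $\sigma^{-1}(j)=i$, equivalently $\sigma(i)=j$, so that $\Gamma(\alpha)\in L'_{i,j}$. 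This establishes the main equivalence.

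For the final assertion, specialising to $j=i$ shows that the $y\in q^{-1}(x_0)$ lying in $\operatorname{\text{Coin}}(q,f_i)$ are exactly those for which $\Gamma(\alpha)\in L'_{i,i}$; composing the bijection $q^{-1}(x_0)\to L$ from the first step with the injection $\Gamma\colon\thinspace L\to S_n$ then produces the asserted one-to-one correspondence onto $L'_{i,i}$. The main technical hurdle I anticipate is the second step: rigorously identifying the permutation relating the endpoints of $\widehat{\Phi}_1\circ\gamma$ with $\tau\circ\Phi_{\#}([q\circ\gamma])$ requires a careful unwinding of the definition of $\tau$ in terms of the deck action of $S_n$ on $F_n(X)$; once this is in hand, the remainder of the argument is essentially bookkeeping.
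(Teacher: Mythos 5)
Your proposal is correct and follows essentially the same route as the paper: both arguments come down to the equivariance identity $\widehat{\Phi}_1(\alpha\ldotp\widetilde{x}_0)=\Gamma(\alpha)\ldotp\widehat{\Phi}_1(\widetilde{x}_0)$ (which you derive by lifting the loop $\Phi\circ q\circ\gamma$ along $\pi$, and the paper cites directly from covering-space theory), followed by reading off the unique coordinate of $\widehat{\Phi}_1$ equal to $x_0$ and specialising to $j=i$ for the final count. Your explicit verification that $y\mapsto\alpha$ is well defined and your coordinate identity $f_j(y)=f_{\Gamma(\alpha)^{-1}(j)}(\widetilde{x}_0)$ merely make precise steps the paper leaves implicit.
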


Since $L_{i,i}'$ does not depend on $x_0$, \relem{auxil0} implies that for all $x \in \operatorname{\text{Fix}}(\phi)$, the set $q^{-1}(x)\cap \operatorname{\text{Coin}}(q,f_i)$ has $\lvert L'_{i,i}\rvert$ elements.
 
\begin{proof}[Proof of \relem{auxil0}]
The proof makes use of basic covering space theory. Let $\alpha\in L$ be the unique deck transformation for which $y$ is equal to the element $\alpha \ldotp \widetilde{x}_{0}$ of $\widehat{X}$ that arises from the action of deck transformation group $L$ on $\widehat{X}$. Then $\Gamma(\alpha)\in L'$ defines a deck transformation of the covering  $\pi\colon\thinspace F_n(X)\to D_n(X)$. Using the fact that $q$ and $\pi$ are covering maps and $\widehat{\Phi}_1$ is a lift of $\Phi$, we have:
\begin{equation}\label{eq:phigamma}
\widehat{\Phi}_1(\alpha \ldotp \widetilde{x})= \Gamma(\alpha) \ldotp \widehat{\Phi}_1(\widetilde{x})
\end{equation}
for all $\widetilde{x}\in q^{-1}(x_0)$, where $\Gamma(\alpha) \ldotp \widehat{\Phi}_1(\widetilde{x})$ is the element of $F_{n}(X)$ arising from the action of $S_{n}$ on $F_{n}(X)$. Since $\widetilde{x}_{0}\in \operatorname{\text{Coin}}(q,f_i)$, $\widehat{\Phi}_1(\widetilde{x}_0)$ is an element of $F_n(X)$ whose $i\up{th}$ coordinate is $x_0$, and $\Gamma(\alpha) \ldotp \widehat{\Phi}_1(\widetilde{x}_0)$ is an element of $F_n(X)$ whose $\Gamma(\alpha)(i)\up{th}$ coordinate is $x_0$. So if $y\in \operatorname{\text{Coin}}(q,f_j)$, the $j\up{th}$  coordinate of $\widehat{\Phi}_1(y)$ is $x_0$, and thus $\Gamma(\alpha) \in L_{i,j}'$. Conversely, if $\Gamma(\alpha) \in L_{i,j}'$ then $\Gamma(\alpha)(i)=j$, so the $j\up{th}$  coordinate of $\widehat{\Phi}_1(y)$ is $x_0$, and hence $y\in \operatorname{\text{Coin}}(q,f_j)$.
For the last part of the statement, since $L_{i,i}'$ is a subgroup of $L'$, it suffices to take $j=i$.
\end{proof}   
   
With \relem{auxil0} in mind, we define the following notation. 
For each  $i\in \brak{1,\ldots,n}$, let $\mathbb{O}_i$ be the orbit of $i$ by the action of the subgroup $L'$ of $S_{n}$ on the set $\brak{1,\ldots,n}$, and let $I_0=\brak{i_1,\ldots,i_s}$ be such that the sets $\brak{\mathbb{O}_i}_{i\in I_{0}}$ form a partition of $\brak{1,\ldots,n}$.
As examples, if $H=\pi_1(X)$  (resp.\ $L'=S_n$) then $\mathbb{O}_i=\brak{i}$ (resp.\ $\mathbb{O}_i=S_n$) for all $i\in \brak{1,\ldots,n}$. The following result underlines the relevance of these orbits. 

\begin{lem}\label{lem:auxil}
With the above notation, let $x_0\in \operatorname{\text{Fix}}(\phi)$. Then there exists $i\in \brak{1,\ldots,n}$ such that: 
\begin{equation*}
\setr{j\in \brak{1,\ldots,n}}{q^{-1}(x_0)\cap \operatorname{\text{Coin}}(q, f_j)\neq \vide}=\mathbb{O}_i.
\end{equation*}
\end{lem}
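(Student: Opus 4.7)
The plan is to combine the surjectivity statement of \repr{nielsen0}(\ref{it:coinfix}) with the $H$-coset description provided by \relem{auxil0}. First, since $x_0 \in \operatorname{\text{Fix}}(\phi)$, the surjectivity of the restriction of $q$ to $\bigcup_{k=1}^n \operatorname{\text{Coin}}(q,f_k)\to \operatorname{\text{Fix}}(\phi)$ in \repr{nielsen0}(\ref{it:coinfix}) supplies at least one point $\widetilde{x}_0\in q^{-1}(x_0)$ together with an index $i\in \{1,\ldots,n\}$ such that $\widetilde{x}_0\in \operatorname{\text{Coin}}(q,f_i)$. This $i$ will be the one that serves as a representative of the orbit $\mathbb{O}_i$ in the statement.

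Next, I would use \relem{auxil0} to translate the membership condition ``$q^{-1}(x_0)\cap \operatorname{\text{Coin}}(q,f_j)\neq \vide$'' into a purely group-theoretic condition. Indeed, given $j\in\{1,\ldots,n\}$, there exists $y\in q^{-1}(x_0)\cap \operatorname{\text{Coin}}(q,f_j)$ if and only if there exists a deck transformation $\alpha\in L$ (namely the one for which $y=\alpha\ldotp\widetilde{x}_0$) such that $\Gamma(\alpha)\in L'_{i,j}$. Since $\alpha$ ranges over all of $L$ as $y$ ranges over $q^{-1}(x_0)$, and since $L'=\im{\Gamma}$, the existence of such $\alpha$ is equivalent to the condition $L'_{i,j}\neq \vide$, in other words to the existence of $\rho\in L'$ with $\rho(i)=j$.

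Finally, by the very definition of $\mathbb{O}_i$ as the orbit of $i$ under the action of $L'$ on $\{1,\ldots,n\}$, the latter condition is equivalent to $j\in \mathbb{O}_i$. Equating these characterisations yields
\begin{equation*}
\setr{j\in \{1,\ldots,n\}}{q^{-1}(x_0)\cap \operatorname{\text{Coin}}(q,f_j)\neq \vide}=\mathbb{O}_i,
\end{equation*}
which is the claimed equality. There is no real obstacle here: the statement is essentially a book-keeping consequence of \relem{auxil0}, with \repr{nielsen0}(\ref{it:coinfix}) used only to guarantee that the set on the left is non-empty so that a base index $i$ is available. The only point that needs care is to observe that the orbit $\mathbb{O}_i$ does not depend on the choice of the base-point $\widetilde{x}_0\in q^{-1}(x_0)\cap \operatorname{\text{Coin}}(q,f_i)$, but this is immediate since any two such choices of $i$ necessarily lie in the same orbit (they are linked by a deck transformation, hence by an element of $L'$).
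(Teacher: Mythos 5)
Your argument is correct and follows essentially the same route as the paper: the paper's proof also fixes a lift $\widetilde{x}_0\in\operatorname{\text{Coin}}(q,f_i)$ via \repr{nielsen0}(\ref{it:coinfix}) and then identifies the indices $j$ with non-empty intersection as exactly the orbit $\mathbb{O}_i$, using the equivariance relation~\reqref{phigamma} directly where you instead cite \relem{auxil0} (which packages that same relation). The only cosmetic difference is this level of indirection; the substance is identical.
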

 
\begin{proof}
The proof uses arguments similar to those of \relem{auxil0}. Let $x_0\in \operatorname{\text{Fix}}(\phi)$, and let $\widetilde{x}_{0}\in \widehat{X}$ be a lift of $x_{0}$. By \repr{nielsen0}(\ref{it:coinfix}), 
$\widetilde{x}_{0}$ belongs to $\operatorname{\text{Coin}}(q,f_i)$ for some $i\in \brak{1,\ldots,n}$. 
First, suppose that $y\in q^{-1}(x_0)\cap \operatorname{\text{Coin}}(q, f_j)$ for some $j\in \brak{1,\ldots,n}$, and let $\alpha\in L$ be such that $\alpha\ldotp \widetilde{x}_{0}=y$. From~\reqref{phigamma}, $\Gamma(\alpha)(i)=j$, so $j\in \mathbb{O}_i$. Conversely, suppose that $j\in \mathbb{O}_i$. Then there exists $\alpha\in L$ such that $\Gamma(\alpha)(i)=j$, and taking $y=\alpha\ldotp \widetilde{x}_{0}$, we have $y\in q^{-1}(x_0)\cap \operatorname{\text{Coin}}(q, f_j)$.
\end{proof}

Note that by \repr{nielsen0}(\ref{it:coinfix}) and \relem{auxil}, $\operatorname{\text{Fix}}(\phi)= q\bigl( \bigcup_{j\in I_{0}} \operatorname{\text{Coin}}(q,f_j)\bigr)$. Since we wish to express the Nielsen number of $\phi$ in terms of the Nielsen coincidence numbers of the pairs $(q, f_i)$, for the values of $i$ belonging to $I_0$, 
we shall compare the Nielsen coincidence relation and the Nielsen coincidence number of the pairs $(q, f_i)$ with the Nielsen relation and the Nielsen number for $\phi$ respectively.

\begin{lem}\label{lem:nielsenclasses}
With the above notation, let $i\in \brak{1,\ldots,n}$, and let $y_1$ and $y_2$ be elements 
of $\operatorname{\text{Coin}}(q, f_i)$ that belong to the same Nielsen coincidence class of the pair $(q,f_{i})$. Then $q(y_1)$ and $q(y_2)$ are elements of $\operatorname{\text{Fix}}(\phi)$ that belong to the same Nielsen fixed point class of the $n$-valued map $\phi$. Further, $q$ sends each Nielsen coincidence class of the pair $(q,f_i)$ surjectively onto a Nielsen fixed point class of $\phi$.
\end{lem}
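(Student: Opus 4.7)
The plan is to use the description of the Nielsen relation for fixed points of $\phi$ given in \rerem{splitpath}, which recasts it in terms of lifts to $\widehat{X}$ and the coordinate maps $f_1,\ldots,f_n$; this is exactly the right language to compare it to the Nielsen coincidence relation for the pair $(q,f_i)$.

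For the first assertion, suppose $y_1,y_2\in\operatorname{\text{Coin}}(q,f_i)$ are Nielsen coincidence equivalent, and let $\widetilde{\gamma}\colon I\to\widehat{X}$ be a path from $y_1$ to $y_2$ such that $q\circ\widetilde{\gamma}\simeq f_i\circ\widetilde{\gamma}$ rel endpoints. Set $\lambda=q\circ\widetilde{\gamma}$, a path in $X$ from $q(y_1)$ to $q(y_2)$; by uniqueness of path lifting, $\widetilde{\gamma}$ is precisely the lift of $\lambda$ starting at $y_1$. Applying \rerem{splitpath}, $\phi\circ\lambda=\brak{g_1,\ldots,g_n}$ with $g_k=f_k\circ\widetilde{\gamma}$ for each $k$. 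Taking $j=i$, the path $g_i=f_i\circ\widetilde{\gamma}$ satisfies $g_i(0)=f_i(y_1)=q(y_1)=\lambda(0)$ and $g_i(1)=f_i(y_2)=q(y_2)=\lambda(1)$, and $g_i\simeq\lambda$ rel endpoints by hypothesis, which is exactly the Nielsen equivalence relation for $\phi$ between the fixed points $q(y_1)$ and $q(y_2)$.

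For the second assertion, let $\mathcal{C}$ be a Nielsen coincidence class of $(q,f_i)$, fix $\widetilde{y}\in\mathcal{C}$, set $x=q(\widetilde{y})$, and let $x'\in\operatorname{\text{Fix}}(\phi)$ be Nielsen equivalent to $x$ as a fixed point of $\phi$. Using \rerem{splitpath}, we obtain a path $\lambda$ from $x$ to $x'$ and an index $j\in\brak{1,\ldots,n}$ such that, writing $\widetilde{\lambda}$ for the lift of $\lambda$ starting at $\widetilde{y}$, the path $g_j=f_j\circ\widetilde{\lambda}$ runs from $x$ to $x'$ and is homotopic to $\lambda$ rel endpoints. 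The key step is that $g_j(0)=f_j(\widetilde{y})=x=q(\widetilde{y})=f_i(\widetilde{y})$; since the coordinates of $\widehat{\Phi}_1(\widetilde{y})\in F_n(X)$ are pairwise distinct, this forces $j=i$. Then $q(\widetilde{\lambda}(1))=\lambda(1)=x'=g_i(1)=f_i(\widetilde{\lambda}(1))$, so $\widetilde{\lambda}(1)\in\operatorname{\text{Coin}}(q,f_i)$, and the relation $q\circ\widetilde{\lambda}=\lambda\simeq g_i=f_i\circ\widetilde{\lambda}$ rel endpoints exhibits $\widetilde{\lambda}(1)$ as Nielsen coincidence equivalent to $\widetilde{y}$, so $\widetilde{\lambda}(1)\in\mathcal{C}$ and $x'=q(\widetilde{\lambda}(1))\in q(\mathcal{C})$. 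Combined with the first assertion this says that $q(\mathcal{C})$ equals a single Nielsen fixed point class of $\phi$.

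The only delicate point is the identification $j=i$ in the surjectivity argument; everything else is a routine unwinding of definitions via \rerem{splitpath}. That identification is forced by the defining property of the configuration space $F_n(X)$, namely the distinctness of coordinates of $\widehat{\Phi}_1(\widetilde{y})$, which prevents two different coordinate maps from taking the same value at a given point of $\widehat{X}$.
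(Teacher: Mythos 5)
Your proof is correct and follows essentially the same route as the paper's: both parts unwind the Nielsen relations via \rerem{splitpath}, and the identification $j=i$ in the surjectivity step is forced exactly as you say (the paper states this more tersely, but the underlying reason is the distinctness of the coordinates of $\widehat{\Phi}_1(\widetilde{y})$ in $F_n(X)$). No gaps.
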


\begin{proof} 
Let $y_1$ and $y_2$ be elements of $\operatorname{\text{Coin}}(q, f_i)$ for some $i\in \brak{1,\ldots,n}$ that belong to the same Nielsen coincidence class of the pair $(q,f_{i})$. By \repr{nielsen0}(\ref{it:coinfix}), $q(y_1)$ and $q(y_2)$ are fixed points of $\phi$. Since $y_1$ and $y_2$ belong to the same Nielsen coincidence class of $(q,f_{i})$, there exists a path $\widetilde{\lambda}\colon\thinspace I \to \widehat{X}$ from $y_{1}$ to $y_{2}$ such that the path $f_{i}\circ \widetilde{\lambda}$ is homotopic in $X$ to $\lambda$ relative to the endpoints $q(y_1)$ and $q(y_2)$, where $\lambda \colon\thinspace I \to X$ is the path defined by $\lambda=q\circ \widetilde{\lambda}$. Now the $n$-valued map $\phi \circ \lambda\colon\thinspace I \multimap X$ is split by~\cite[Lemma~12]{GG15}, and by \rerem{splitpath}, $\phi \circ \lambda=\brak{g_{1},\ldots,g_{n}}$, where for $j=1,\ldots,n$, $g_{j}= f_{j} \circ \widetilde{\lambda}$. So $g_{i}(0)= f_{i} \circ \widetilde{\lambda}(0)=f_{i}(y_{1})=q(y_{1})$, $g_{i}(1)= f_{i} \circ \widetilde{\lambda}(1)=f_{i}(y_{2})=q(y_{2})$, and $g_{i}$ is homotopic to $\lambda$ relative to the endpoints $q(y_1)$ and $q(y_2)$, from which we deduce that $q(y_1)$ and $q(y_2)$ belong to the same Nielsen fixed point class of $\phi$. To prove the second part of the statement, by the first part, it suffices to show that if $x$ is a fixed point of $\phi$ that belongs to the same Nielsen fixed point class of $\phi$ as $q(y_{1})$ then there exists $y\in \operatorname{\text{Coin}}(q,f_i)$ such that $q(y)=x$, and $y$ and $y_{1}$ belong to the same Nielsen coincidence class of the pair $(q,f_{i})$. To see this, note that by \rerem{splitpath}, there exist a path $\lambda\colon\thinspace I \to X$ from $q(y_{1})$ to $x$, a lift $\widetilde{\lambda}\colon\thinspace I \to \widehat{X}$ such that $\widetilde{\lambda}(0)=y_{1}$ and $j\in \brak{1,\ldots,n}$ such that $\lambda(0)=g_j(0)=f_j\circ \widetilde \lambda(0)$, $\lambda(1)=g_j(1)=f_j\circ \widetilde{\lambda}(1)$, and $\lambda$ is homotopic to $g_j$ relative to their endpoints. In particular, $f_{j}(y_{1})= q(y_{1})$, and so $j=i$ because $y_{1}\in \operatorname{\text{Coin}}(q, f_i)$. Further, if $y=\lambda(1)$ then $q(y)=x$ because $\widetilde{\lambda}$ is a lift of $\lambda$, and $x=\lambda(1)=f_{i}(y)$, so $y\in \operatorname{\text{Coin}}(q, f_i)$. Finally, the paths $\lambda$ and $g_{i}=f_i\circ \widetilde{\lambda}$ are homotopic in $X$ relative to their endpoints, and hence $y_{1}$ and $y$ belong to the same Nielsen coincidence class of the pair $(q,f_{i})$ as required.
\end{proof}

In order to obtain a formula for $N(\phi)$, another ingredient that we require is the number of points of $q^{-1}(x_{0})\cap \operatorname{\text{Coin}}(q,f_i)$ 
that belong to the same Nielsen coincidence class of the pair $(q,f_i)$, where $x_0\in \operatorname{\text{Fix}}(\phi)$ and $i\in I_0$. 
Suppose that $q^{-1}(x_{0})\cap \operatorname{\text{Coin}}(q,f_i)\neq \vide$, and let 
$\widetilde{x}_0,y\in\widehat{X}$ be elements of this intersection.
There exists a unique $\mu \in L=\pi_1(X)/H$ such that $y=\mu \ldotp \widetilde{x}_0$. Let $L_i$ be the subset of $L$ consisting of such $\mu$ as $y$ runs over the elements of $q^{-1}(x_0)\cap \operatorname{\text{Coin}}(q,f_i)$. 
Note that $L_i=\set{\mu \in L}{\Gamma(\mu)(i)=i}$, in particular, $L_{i}$ is independent of $\widetilde{x}_0$, $L_i$ is a subgroup of $L$, the order of $L_i$ is equal to the cardinality of $q^{-1}(x_0)\cap{\text{Coin}}(q,f_i)$, and $L_{i,i}'=\Gamma(L_i)$, so $L_{i,i}' \cong L_i$.
If $\mu\in L_i$, consider the corresponding element $y\in q^{-1}(x_0)\cap\operatorname{\text{Coin}}(q,f_i)$ defined by $y=\mu \ldotp \widetilde{x}_0$, and let $\gamma \colon\thinspace I \to \widehat{X}$ be a path from $\widetilde{x}_0$ to $y$. Then $f_i\circ \gamma$ and $q\circ \gamma$ are loops in $X$ based at $x_{0}$. Let $W_{\widetilde x_0}(\mu)$ be the subset of $\pi_1(X)$ of loop classes of the form $[(q\circ \gamma) \ast (f_i\circ \gamma)^{-1}]$, where $\gamma$ runs over the set of paths from $\widetilde{x}_0$ to $y$. Observe that  $W_{\widetilde x_0}(\mu)$  contains the trivial element of $\pi_1(X)$ if and only if $\widetilde{x}_0$ and $y$ belong to the same Nielsen coincidence class for the pair $(q,f_i)$.
With this in mind, let $K_i(\widetilde{x}_0)$ be the set of elements $y\in q^{-1}(x_0)\cap\operatorname{\text{Coin}}(q,f_i)$    
for which $W_{\widetilde x_0}(\mu)$ contains the trivial element, where $\mu\in L_i$ is such that $y=\mu \ldotp \widetilde{x}_0$. Then $K_i(\widetilde{x}_0)$ is the subset of elements $q^{-1}(x_0)\cap\operatorname{\text{Coin}}(q,f_i)$ that belong to the same Nielsen coincidence class of the pair $(q,f_i)$ as $\widetilde{x}_0$. Let $\lvert K_i(\widetilde{x}_0)\rvert$ denote the cardinality of $K_i(\widetilde{x}_0)$.

\begin{lem}\label{lem:fund1} 
With the above notation, let $x_{0}\in \operatorname{\text{Fix}}(\phi)$, and let $\widetilde{x}_0\in q^{-1}(x_{0})\cap \operatorname{\text{Coin}}(q,f_i)$.\vspace*{-1mm}

\begin{enumerate}
\item\label{it:fund1a} The number of coincidence points of the pair $(q, f_i)$ that belong to $q^{-1}(x_{0})$ is equal to the order of the subgroup $L_i$.
\item\label{it:fund1b} If $\widetilde{z}\in q^{-1}(x_0)\cap  \operatorname{\text{Coin}}(q,f_i)$, the set $K_i(\widetilde{z})$ of points of $q^{-1}(x_0)$ that belong to the same Nielsen coincidence class for the pair $(q,f_{i})$ as $\widetilde{z}$ is in one-to-one correspondence with the set $K_i(\widetilde{x}_0)$.
\item\label{it:fund1c} Let  $\widetilde{z}\in  \operatorname{\text{Coin}}(q,f_i)$ be such that $\widetilde{x}_0$ and $\widetilde{z}$ belong to the same Nielsen coincidence class for the pair $(q,f_i)$. Then $K_i(\widetilde{z})$ is in one-to-one correspondence with the set $K_i(\widetilde{x}_0)$.
\end{enumerate}
\end{lem}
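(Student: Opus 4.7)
My plan is to treat the three parts in order: part~(\ref{it:fund1a}) will follow directly from observations made just before the lemma statement, part~(\ref{it:fund1b}) will use a deck transformation argument, and part~(\ref{it:fund1c}) will combine the deck transformation structure with a path witnessing the Nielsen equivalence between $\widetilde{x}_{0}$ and $\widetilde{z}$. The algebraic ingredient I will use throughout is that for $\mu \in L_{i}$, equation~(\ref{eq:phigamma}) combined with $\Gamma(\mu)(i)=i$ yields $f_{i}(\mu \ldotp \widetilde{x}) = f_{i}(\widetilde{x})$ for all $\widetilde{x} \in \widehat{X}$, while $q(\mu \ldotp \widetilde{x}) = q(\widetilde{x})$ holds automatically because $\mu$ is a deck transformation.

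For part~(\ref{it:fund1a}), I will simply note that the free and transitive action of $L$ on the fibre $q^{-1}(x_{0})$ restricts, by the very definition of $L_{i}$, to a free and transitive action of $L_{i}$ on $q^{-1}(x_{0}) \cap \operatorname{\text{Coin}}(q, f_{i})$, so this set has cardinality $|L_{i}|$.

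For part~(\ref{it:fund1b}), I would take $\nu \in L_{i}$ to be the unique deck transformation with $\widetilde{z} = \nu \ldotp \widetilde{x}_{0}$. By the observation above, for any path $\gamma$ in $\widehat{X}$ we have $q \circ (\nu \gamma) = q \circ \gamma$ and $f_{i} \circ (\nu \gamma) = f_{i} \circ \gamma$, so $\nu$ preserves both the set $q^{-1}(x_{0}) \cap \operatorname{\text{Coin}}(q, f_{i})$ and the Nielsen coincidence relation of $(q, f_{i})$. Hence $\nu$ will restrict to a bijection $K_{i}(\widetilde{x}_{0}) \to K_{i}(\widetilde{z})$, with inverse $\nu^{-1}$.

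For part~(\ref{it:fund1c}), I would choose a path $\delta\colon\thinspace I \to \widehat{X}$ from $\widetilde{x}_{0}$ to $\widetilde{z}$ for which $q \circ \delta$ and $f_{i} \circ \delta$ are homotopic relative to the endpoints. Given $y = \mu \ldotp \widetilde{x}_{0} \in K_{i}(\widetilde{x}_{0})$ with $\mu \in L_{i}$, I would define $F(y) = \mu \ldotp \widetilde{z}$, which lies in $q^{-1}(q(\widetilde{z}))$. The translated path $\mu \delta$ runs from $y$ to $F(y)$, and since $\mu \in L_{i}$ its images under $q$ and $f_{i}$ coincide with $q\delta$ and $f_{i}\delta$ respectively, so they remain homotopic relative to the endpoints. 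Concatenating $\delta^{-1}$, a path $\gamma$ from $\widetilde{x}_{0}$ to $y$ witnessing $y \in K_{i}(\widetilde{x}_{0})$, and $\mu \delta$ then yields a path from $\widetilde{z}$ to $F(y)$ whose images under $q$ and $f_{i}$ are homotopic relative to the endpoints, proving $F(y) \in K_{i}(\widetilde{z})$. A symmetric construction using $\delta^{-1}$ and reversing the roles of $\widetilde{x}_{0}$ and $\widetilde{z}$ provides the inverse of $F$, giving the desired bijection. The main obstacle will be here: since $\widetilde{z}$ and $\widetilde{x}_{0}$ may lie in different fibres of $q$, pure deck-transformation arguments no longer suffice, and the witness path $\delta$ is essential to transport between the fibres while preserving the Nielsen-class data; the crucial observation is that the same element $\mu \in L_{i}$ simultaneously describes $y$ relative to $\widetilde{x}_{0}$ and $F(y)$ relative to $\widetilde{z}$.
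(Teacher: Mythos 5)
Your proof is correct and follows essentially the same route as the paper: the bijections in parts~(\ref{it:fund1b}) and~(\ref{it:fund1c}) are exactly the maps constructed there (your translate $\nu\circ\gamma$ and the path $\mu\ldotp\delta$ are precisely the lifts $\gamma_1$ and $\lambda'$ of $q\circ\gamma$ and $q\circ\lambda$ that the paper uses, and your concatenation in part~(\ref{it:fund1c}) is the paper's $\lambda^{-1}\ast\gamma\ast\lambda'$). The only repackaging is your identity $f_i\circ\mu=f_i$ for $\mu\in L_i$, used in place of the paper's uniqueness-of-path-lifting argument in $\pi\colon F_n(X)\to D_n(X)$ showing $f_i\circ\gamma_1=f_i\circ\gamma$; this is a legitimate streamlining, but to invoke it in part~(\ref{it:fund1c}), where $\widetilde{z}$ need not lie in $q^{-1}(x_0)$, you should record that the equivariance formula \reqref{phigamma} holds on all of $\widehat{X}$ (both sides being lifts of $\Phi\circ q$ that agree at a point), not only on the fibre $q^{-1}(x_0)$ where the paper states it.
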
 

\begin{proof}\mbox{}
\begin{enumerate}
\item This follows from \relem{auxil0} and the isomorphism $L_i \cong L_{i,i}'$. 
\item Let us construct an injective map from $K_i(\widetilde{x}_0)$ to 
$K_i(\widetilde{z})$.
Given $y\in K_i(\widetilde{x}_0)$,
by definition, there exists a path $\gamma\colon \thinspace I \to \widehat{X}$ such that $\gamma(0)=\widetilde{x}_0$, $\gamma(1)=\widetilde{y}$  and $q\circ \gamma$ is homotopic to $f_i\circ \gamma$ relative to their endpoints. Let $\gamma_1$ be the unique lift of $q\circ \gamma$ for which $\gamma_1(0)=\widetilde{z}$. Since $q\circ \gamma$ is a loop in $X$ based at $x_{0}$, $\gamma_1(1)\in q^{-1}(x_{0})$. 
We claim that $\gamma_1(1)\in  K_i(\widetilde{z})$. To see this, let us show that $q\circ \gamma_1=q\circ \gamma$ is homotopic to $f_i \circ \gamma_1$ relative to their endpoints. It suffices to observe that $f_i\circ \gamma=f_i\circ \gamma_1$. Since the two compositions $\widehat{X} \stackrel{q}{\to} X \stackrel{\Phi}{\to} D_{n}(X)$ and $\widehat{X} \stackrel{\widehat{\Phi}_{1}}{\to} F_{n}(X) \stackrel{\pi}{\to} D_{n}(X)$ are equal, we have $\pi(\widehat \Phi\circ \gamma_1)=\pi(\widehat \Phi\circ \gamma)$, in other words, the $n$-tuple $(f_{1}\circ \gamma, \ldots, f_{n}\circ \gamma)$ of paths is a permutation of the $n$-tuple $(f_{1}\circ \gamma_{1}, \ldots, f_{n}\circ \gamma_{1})$ of paths. Since these two $n$-tuples are paths in $F_{n}(X)$ and $f_{i}\circ \gamma(0)=f_{i}\circ \gamma_{1}(0)=x_{0}$, it follows that $f_{i}\circ \gamma=f_{i}\circ \gamma_{1}$, and hence $f_{i}\circ \gamma_{1}$ is homotopic to $q\circ \gamma_1$ relative to their endpoints. In particular, $f_{i}\circ \gamma_{1}(1)=f_{i}\circ \gamma(1)=q\circ \gamma(1)=q\circ \gamma_{1}(1)$, so $\gamma_{1}(1)\in q^{-1}(x_{0}) \cap \operatorname{\text{Coin}}(q,f_i)$, and thus $\gamma_{1}(1)\in K_{i}(\widetilde{z})$, which proves the claim. By construction, the map from $K_i(\widetilde{x}_0)$ to $K_{i}(\widetilde{z})$ that to $y$ associates $\gamma_{1}(1)$ is injective. Exchanging the r\^{o}les of $\widetilde{x}_0$ and $z$, we see that $K_i(\widetilde{x}_0)$ and $K_{i}(\widetilde{z})$ have the same number of elements, and that this map is a bijection.
\item Let $\lambda\colon\thinspace I \to \widehat{X}$ be a path such that $\lambda(0)=\widetilde{x}_0$,  $\lambda(1)=\widetilde{z}$ and $q\circ \lambda$ is homotopic to $f_i\circ \lambda$ relative to their endpoints. Let $y\in q^{-1}(x_{0})\cap \operatorname{\text{Coin}}(q,f_i)$ be a point that belongs to the same Nielsen coincidence class of $(q,f_i)$ as $\widetilde{x}_0$, and let $\gamma\colon\thinspace I \to \widehat{X}$ be a path such that $\gamma(0)=\widehat{x}_0$, $\gamma(1)=y$ and $q\circ \gamma$ is homotopic to $f_i\circ \gamma$ relative to their endpoints. Let $\lambda'\colon\thinspace I \to \widehat{X}$ be the unique lift of $q\circ \lambda$ for which $\lambda'(0)=y$, and let $\gamma'\colon\thinspace I \to \widehat{X}$ be the path defined by $\gamma'=\lambda^{-1}\ast\gamma\ast\lambda'$. Then $\gamma'(0)=\widetilde{z}$, $q\circ \gamma'(1)=q\circ \lambda'(1)=q\circ \lambda(1)=q(\widetilde{z})$, so $\gamma'(1)\in q^{-1}(q(\widetilde{z}))$. As in the proof of part~(\ref{it:fund1b}), $q\circ \lambda'$ is homotopic to $f_i\circ \lambda'$ relative to their endpoints, and it follows that $q\circ \gamma'$ is homotopic to $f_i\circ \gamma'$ relative to their endpoints, so $\gamma'(1)\in K_i(\widetilde{z})$. We thus obtain an injective map from $K_i(\widetilde{x}_0)$ and $K_i(\widetilde{z})$ that to $y$ associates $\gamma'(1)$. By exchanging the r\^{o}les of $\widetilde{x}_0$ and $\widetilde{z}$, we see that this map is a bijection.\qedhere
\end{enumerate}
\end{proof}

If $i\in \brak{1,\ldots,n}$, by \relem{fund1}, the set $q^{-1}(x_0)\cap  \operatorname{\text{Coin}}(q,f_i)$ contains $\lvert L_i\rvert$ points, and is  partitioned into Nielsen coincidence classes of the pair $(q,f_i)$ that each contain $\lvert K_i(\widetilde{x}_0)\rvert$ points. So this set is partitioned into $\lvert L_i\rvert/\lvert K_i(\widetilde{x}_0)\rvert$ disjoint subsets each of which is contained in a Nielsen coincidence class of $(q,f_i)$. If $W_1, W_2$ are Nielsen coincidence classes of $(q,f_i)$, we say that they are \emph{related} if $q(W_1)\cap q(W_2)\neq \vide$. This defines an equivalence relation. Let $C_1, \ldots,C_r$ denote the corresponding set of equivalence classes of the Nielsen coincidence classes of $(q,f_i)$.

\begin{lem}\label{lem:auxiV} 
With the above notation, for $j=0,1$, let $x_j \in \operatorname{\text{Fix}}(\phi)$, and suppose that $\widetilde{x}_j \in q^{-1}(x_j)\cap \operatorname{\text{Coin}}(q,f_i)$ for some $i\in \brak{1,\ldots,n}$. If $\widetilde x_0$ and $\widetilde{x}_1$ belong to related Nielsen coincidence classes of $(q,f_i)$ then $\lvert K_i(\widetilde{x}_0)\rvert=\lvert K_i(\widetilde{x}_1)\rvert$.
\end{lem}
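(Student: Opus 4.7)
The plan is to derive the equality directly from parts~(\ref{it:fund1b}) and~(\ref{it:fund1c}) of \relem{fund1}, with the hypothesis of being in related Nielsen coincidence classes providing the bridge between the two situations treated by those parts.

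First I would unpack the definition of ``related''. For $j=0,1$, let $W_j$ denote the Nielsen coincidence class of the pair $(q,f_i)$ that contains $\widetilde{x}_j$. Being related means $q(W_0)\cap q(W_1)\neq \vide$, so there exist $y_0\in W_0$ and $y_1\in W_1$ with $q(y_0)=q(y_1)$; call this common image $z\in X$. By \repr{nielsen0}(\ref{it:coinfix}), $z\in \operatorname{\text{Fix}}(\phi)$, and $y_0,y_1\in q^{-1}(z)\cap \operatorname{\text{Coin}}(q,f_i)$.

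Next I would chain together the three bijections supplied by \relem{fund1}. Since $\widetilde{x}_0$ and $y_0$ belong to the same Nielsen coincidence class $W_0$ of $(q,f_i)$, part~(\ref{it:fund1c}) yields $\lvert K_i(\widetilde{x}_0)\rvert=\lvert K_i(y_0)\rvert$. Similarly, $\widetilde{x}_1$ and $y_1$ lie in $W_1$, so part~(\ref{it:fund1c}) gives $\lvert K_i(\widetilde{x}_1)\rvert=\lvert K_i(y_1)\rvert$. Finally, since $y_0$ and $y_1$ both lie in $q^{-1}(z)\cap \operatorname{\text{Coin}}(q,f_i)$, part~(\ref{it:fund1b}) applied with base point $y_0$ (in place of $\widetilde{x}_0$ in the statement of that part) yields $\lvert K_i(y_0)\rvert=\lvert K_i(y_1)\rvert$. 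Concatenating the three equalities gives $\lvert K_i(\widetilde{x}_0)\rvert=\lvert K_i(\widetilde{x}_1)\rvert$, as required.

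There is essentially no technical obstacle here, since \relem{fund1} has already done all the bijective bookkeeping. The only subtle point to check is that the roles of the base points in the statement of \relem{fund1}(\ref{it:fund1b}) can indeed be taken to be $y_0$ and $y_1$ rather than the originally distinguished $\widetilde{x}_0$; but this is clear because that part of \relem{fund1} depends only on the common image in $X$ (here $z$) and not on a choice of preferred lift. Hence the statement follows.
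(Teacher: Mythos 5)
Your proof is correct and follows essentially the same route as the paper: both arguments take a pair of witnesses $y_0\in W_0$, $y_1\in W_1$ with $q(y_0)=q(y_1)$ (the paper calls them $\widetilde{x}_2$ and $\widetilde{x}_2'$) and chain $\lvert K_i(\widetilde{x}_0)\rvert=\lvert K_i(y_0)\rvert=\lvert K_i(y_1)\rvert=\lvert K_i(\widetilde{x}_1)\rvert$ via \relem{fund1}(\ref{it:fund1c}), (\ref{it:fund1b}) and (\ref{it:fund1c}) respectively. Your version is in fact slightly leaner, since the paper additionally constructs an auxiliary point $\widetilde{x}_1'\in q^{-1}(q(\widetilde{x}_1))$ in the Nielsen class of $\widetilde{x}_0$ by lifting a path, a step that its own final chain of equalities does not end up needing.
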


\begin{proof}
We shall construct a bijection between  $ K_i(\widetilde{x}_0)$ and $ K_i(\widetilde{x}_1)$. First, we claim that there is an element $\widetilde{x}_1' \in q^{-1}(q(\widetilde{x}_1))$ that belongs to the same Nielsen coincidence class of $(q,f_i)$ as $\widetilde{x}_0$. To see this, let $\widetilde{x}_2,\widetilde{x}_2'\in \operatorname{\text{Coin}}(q,f_i)$ be such that $q(\widetilde{x}_2)=q(\widetilde{x}_2')$,  and for which $\widetilde{x}_2$ (resp.\ $\widetilde{x}_2'$) belongs to the same Nielsen coincidence class of $(q,f_i)$ as $\widetilde{x}_0$ (resp.\ as $\widetilde{x}_1$). Let $\lambda\colon\thinspace I \to \widehat{X}$ be a path such that $\lambda(0)=\widetilde{x}_2'$,  $\lambda(1)=\widetilde{x}_1$ and $q\circ \lambda$ is homotopic to $f_i\circ \lambda$ relative to their endpoints, and let $\lambda'\colon\thinspace I \to \widehat{X}$ be the unique lift of $q\circ \lambda$ for which $\lambda'(0)=\widetilde{x}_2$, and let $\widetilde{x}_1'=\lambda'(1)$. Then $q(\widetilde{x}_1')=x_1$, and arguing as in the proof of \relem{fund1}, it follows that $q\circ \lambda'$ is homotopic to $f_i\circ \lambda'$ relative to their endpoints. Thus $\widetilde{x}_2$ and $\widetilde{x}_1'$ belong to the same Nielsen coincidence class of $(q,f_{i})$, which proves the claim since $\widetilde{x}_2$ and $\widetilde{x}_0$ belong to the same Nielsen coincidence class of $(q,f_i)$. Then by \relem{fund1}(\ref{it:fund1b}) and~(\ref{it:fund1c}), we have $\lvert K_i(\widetilde{x}_0)\rvert=\lvert K_i(\widetilde{x}_2)\rvert=\lvert K_i(\widetilde{x}_2')\rvert=\lvert K_i(\widetilde{x}_1)\rvert$ as required. 
\end{proof}


The following lemma is a key ingredient in the process of giving a computable formula for $N(\phi)$, which is the number of essential Nielsen classes.
We fix once and for all an orientation of the manifold $X$, and we choose the unique orientation of $\widehat{X}$ for which $q$ preserves orientation. 
Further,  the orientation on $\widehat{X}$ (resp.\ $X$) induces an orientation on any open subset of $\widehat{X}$ (resp.\ of $X$), and hence a local orientation system on $\widehat{X}$ (resp.\ on $X$). The map $q$ carries the local orientation of $\widehat X$ to that of $X$. The fixed point index of maps defined on open sets of $X$ as well as the coincidence index of maps from open sets of $\widehat{X}$ to $X$, are computed with respect to these orientations.

\begin{lem}\label{lem:index} Let $X$ be an orientable manifold,  
$x_0\in \operatorname{\text{Fix}}(\phi)$ be an isolated fixed point of $\phi$, and let $\widetilde{x}_{0}\in \widehat{X}$ and $i\in \brak{1,\ldots,n}$ be such that $q(\widetilde{x}_{0})=x_0$ and $\widetilde{x}_{0}\in \operatorname{\text{Coin}}(q,f_i)$. Then the
fixed point index of $\phi$ at $x_0$ is equal to the
coincidence index of the pair $(q, f_i)$ at $\widetilde{x}_{0}$.
\end{lem}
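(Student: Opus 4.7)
The plan is to reduce both sides to the fixed point index of a common single-valued map defined near $x_0$, and then invoke a standard identification of coincidence index with fixed point index in the presence of a local homeomorphism.

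First, I would use that $q\colon \widehat{X}\to X$ is a covering map to choose an open neighbourhood $\widetilde{U}$ of $\widetilde{x}_0$ in $\widehat{X}$ on which $q\bigl\lvert_{\widetilde U}\bigr.\colon \widetilde U \to U$ is an orientation-preserving homeomorphism onto an open neighbourhood $U$ of $x_0$ in $X$; by shrinking $U$, I may further assume that $x_0$ is the only fixed point of $\phi$ in $U$ and that $\widetilde{x}_0$ is the only coincidence point of the pair $(q,f_i)$ in $\widetilde U$. Using the fact that $\widehat{\Phi}_{1}=(f_1,\ldots,f_n)$ is a lift of $\Phi\circ q$, we obtain $\pi\circ(f_1,\ldots,f_n)\bigl\lvert_{\widetilde U}\bigr.=\Phi\circ q\bigl\lvert_{\widetilde U}\bigr.$, and composing with the local inverse $(q\bigl\lvert_{\widetilde U}\bigr.)^{-1}$ yields
\begin{equation*}
\Phi\bigl\lvert_{U}\bigr.=\pi\circ (g_1,\ldots,g_n), \quad\text{where } g_j=f_j\circ (q\bigl\lvert_{\widetilde U}\bigr.)^{-1}\text{ for } j=1,\ldots,n.
\end{equation*}
Hence $\phi\bigl\lvert_U\bigr.=\brak{g_1,\ldots,g_n}$ is a local splitting of $\phi$ near $x_0$, to which Schirmer's definition of the fixed point index may be applied (the definition of $\operatorname{Ind}(\phi,x_0)$ is independent of the chosen splitting by~\cite[Section~3]{Sch1}).

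Next, I would identify the relevant coordinate map: the hypothesis $\widetilde{x}_0\in \operatorname{Coin}(q,f_i)$ gives $g_i(x_0)=f_i(\widetilde{x}_0)=q(\widetilde{x}_0)=x_0$, so $x_0\in \operatorname{Fix}(g_i)$, and it is the unique fixed point of $g_i$ in $U$ by the choice of $U$. By shrinking $U$ once more if necessary, we may also ensure that none of the other $g_j$ (for $j\neq i$) has a fixed point in $U$, because any such fixed point would give an extra fixed point of $\phi$. By the very definition recalled in \resec{pres}, we then have $\operatorname{Ind}(\phi,x_0)=\operatorname{Ind}(g_i,x_0)$.

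Finally, I would invoke the following standard fact from coincidence theory: if $N$ and $X$ are oriented manifolds of the same dimension, $V\subset N$ is open, $h\colon V\to X$ is a map, and $q\colon V\to q(V)\subset X$ is an orientation-preserving homeomorphism with $q(\widetilde{x}_0)=h(\widetilde{x}_0)$, then the coincidence index of $(q,h)$ at $\widetilde{x}_0$ coincides with the fixed point index of $h\circ q^{-1}$ at $q(\widetilde{x}_0)$. This is immediate from the definitions, since locally both numbers are computed as the degree of the same map of spheres after identifying $V$ with its image under $q$. Applying this to $h=f_i\bigl\lvert_{\widetilde U}\bigr.$ and $q\bigl\lvert_{\widetilde U}\bigr.$, and using the excision property of the coincidence index to pass from $\operatorname{Ind}((q\bigl\lvert_{\widetilde U}\bigr.,f_i\bigl\lvert_{\widetilde U}\bigr.),\widetilde{x}_0)$ to $\operatorname{Ind}((q,f_i),\widetilde{x}_0)$, we conclude that $\operatorname{Ind}(g_i,x_0)=\operatorname{Ind}((q,f_i),\widetilde{x}_0)$, which combined with the previous step gives the desired equality.

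The main technical point is the third step, where orientation conventions must be tracked carefully; the rest is a bookkeeping argument using the lift $\widehat{\Phi}_1$ to manufacture a local splitting adapted to $\widetilde{x}_0$.
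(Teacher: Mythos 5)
Your proposal is correct and follows essentially the same route as the paper's proof: restrict to a neighbourhood on which $q$ is an orientation-preserving homeomorphism, use the lift $\widehat{\Phi}_1$ to produce the local splitting $\overline{f}_j=f_j\circ (q\lvert_{\widetilde U})^{-1}$, identify $\operatorname{Ind}(\phi,x_0)$ with $\operatorname{Ind}(\overline{f}_i,x_0)$ by Schirmer's definition, and then convert that fixed point index into the coincidence index of $(q,f_i)$ at $\widetilde{x}_0$ via the local homeomorphism. The extra shrinking of $U$ to isolate the coincidence and exclude fixed points of the other coordinate maps is a harmless refinement that the paper leaves implicit.
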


 \begin{proof}  Since
 $X$ is a manifold and $\widehat{X}$ is a finite covering of $X$, there exists an open, contractible neighbourhood $U$ of $x_0$ such that the restriction $q \left\lvert_{\widetilde{U}}\right.$ of $q$ to the component $\widetilde{U}$ of $q^{-1}(U)$ that contains $\widetilde{x}_0$ is a homeomorphism. The restriction of the $n$-valued map $\phi$ to $U$ is split, and a splitting  is given by $\brak{\overline{f}_1,\ldots ,\overline{f}_n}$, where  $\overline{f}_j=f_j\circ (q \left\lvert_{\widetilde{U}}\right.)^{-1}\colon\thinspace U \to X$ for $j=1,\ldots,n$.
 So $x_0$ is a fixed point of the map $\overline{f}_j$ for some $j\in \brak{1,\ldots,n}$,
and by definition, $\operatorname{\text{Ind}}(\phi, x_0)$ is equal to
$\operatorname{\text{Ind}}(\overline{f}_j, x_0)$. Since $q\left\lvert_{\widetilde{U}}\right.$ is an orientation-preserving
homeomorphism, $\operatorname{\text{Ind}}(\overline{f}_j, x_0)$ is equal to the coincidence index of the pair $(q, f_j)$ at the coincidence $\widetilde{x}_0$, where we use the local orientation in a neighbourhood of $\widetilde{x}_0$ determined by the local homeomorphism $q$, and this proves the lemma. 
 \end{proof}
 
One consequence of \relem{index} is that if a Nielsen coincidence class of $(q, f_i)$ is sent to a Nielsen fixed point class of $\phi$ then one of these Nielsen classes is essential if and only if the other is.  
 
\begin{cor}\label{cor:indexessent} 
Under the hypothesis of \relem{index}, if $C$ is a Nielsen coincidence class of the pair $(q, f_i)$ and $\widetilde{x}_0 \in C$, then the coincidence index of $C$ is equal to $\lvert K_i(\widetilde{x}_0)\rvert$ times the index of the fixed point class $q(C)$ of $\phi$. 
 \end{cor}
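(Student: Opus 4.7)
The plan is to unfold each side of the claimed equality as a sum of local indices over the points of the respective class, and then to match the local contributions via \relem{index}. First I would note, as already remarked in the discussion preceding \relem{nielsenclasses}, that by the homotopy invariance of the Nielsen number we may assume $\operatorname{\text{Fix}}(\phi)$ is finite; since $q$ is a finite covering, $\operatorname{\text{Coin}}(q,f_i)$ is then also finite, and in particular the class $C$ consists of finitely many points, so both indices in the statement are honest finite sums of local indices.

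The key combinatorial step will be to verify that the restriction $q\colon\thinspace C\to q(C)$, which by \relem{nielsenclasses} is a surjection onto a Nielsen fixed point class of $\phi$, has constant fibre cardinality equal to $\lvert K_i(\widetilde{x}_0)\rvert$. To this end, for each $y\in q(C)$ I would choose some $\widetilde{y}\in C$ with $q(\widetilde{y})=y$ and observe that, by the very definition of $K_i(\widetilde{y})$, one has $q^{-1}(y)\cap C=K_i(\widetilde{y})$. Since $\widetilde{x}_0$ and $\widetilde{y}$ both lie in the single Nielsen coincidence class $C$ of $(q,f_i)$, \relem{fund1}(\ref{it:fund1c}) gives $\lvert K_i(\widetilde{y})\rvert=\lvert K_i(\widetilde{x}_0)\rvert$, which establishes the uniformity of the fibre.

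Given this uniformity, the formula falls out by direct computation:
\begin{align*}
\operatorname{\text{Ind}}((q,f_i),C)&=\sum_{\widetilde{y}\in C}\operatorname{\text{Ind}}((q,f_i),\widetilde{y})=\sum_{\widetilde{y}\in C}\operatorname{\text{Ind}}(\phi,q(\widetilde{y}))\\
&=\sum_{y\in q(C)}\lvert q^{-1}(y)\cap C\rvert\cdot\operatorname{\text{Ind}}(\phi,y)=\lvert K_i(\widetilde{x}_0)\rvert\cdot\operatorname{\text{Ind}}(\phi,q(C)),
\end{align*}
where the second equality is \relem{index} applied point by point, and the last equality uses the constant fibre count established above.

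I do not expect any genuine obstacle; the only subtle point to keep in mind is to count preimages inside $C$ rather than inside all of $\operatorname{\text{Coin}}(q,f_i)$, since a priori other Nielsen coincidence classes of $(q,f_i)$ may also project into $q(C)$. This is handled automatically by summing over $\widetilde{y}\in C$, and is the reason that the relevant constant is $\lvert K_i(\widetilde{x}_0)\rvert$ rather than the full order of $L_i$.
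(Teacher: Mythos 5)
Your proof is correct and follows essentially the same route as the paper: both decompose the index of $C$ as a sum of local indices, use \relem{index} to identify each local coincidence index with the corresponding local fixed point index of $\phi$, and use the constancy of $\lvert K_i(\cdot)\rvert$ along $C$ to count the fibres of $q\colon\thinspace C \to q(C)$. The only cosmetic difference is that you invoke \relem{fund1}(c) directly (which suffices, since all the relevant points lie in the single class $C$), whereas the paper routes the fibre-count through \relem{auxiV}.
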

 
\begin{proof}
Let $C$ be a Nielsen coincidence class of the pair $(q, f_i)$ for some $i\in \brak{1,\dots,n}$. By \relem{index}, the points of $C$ that lie in $q^{-1}(q(\widetilde {x}_0))$ all have the same coincidence index for the pair $(q, f_i)$, this index being equal to $\operatorname{\text{Ind}}(\phi, x_0)$.
By \relem{auxiV}, the cardinality of the set $C\cap q^{-1}(q(\widetilde {x}_0))$   
is equal to $\lvert K_i(\widetilde{x}_0)\rvert$. 
If $\widetilde{x}_1$ is another point of $C$ (so is Nielsen equivalent to $\widetilde{x}_0$), the same conclusions hold, and similarly 
the cardinality of the set $C\cap q^{-1}(q(\widetilde {x}_1))$ is equal to $\lvert K_i(\widetilde{x}_0)\rvert$, and all of the points of this set have the same coincidence index for the pair $(q, f_i)$. 
Since the index of $C$ 
is the sum of the indices over the elements of the class $C$, which we can assume to be finite, 
the result follows.  
\end{proof}
 
For results related with \relem{index} and \reco{indexessent}  and more  results of similar nature, see
~\cite{Je,Moh}.


If $s\in \brak{1,\ldots,r}$ and $i\in I_{0}$, let $m_{i,s}= \lvert L_i\rvert/\lvert K_i(\widetilde{x}_s)\rvert$, where $\widetilde{x}_s$ is an element of one of the Nielsen coincidence classes of $(q,f_i)$ that belongs to the equivalence class $C_s$. This quantity is the number of Nielsen coincidence classes of $(q,f_i)$ in $C_s$ that are sent to the same Nielsen fixed point class of $\phi$ under the map $q$. Observe that if $\lvert L_i\rvert=1$ then $m_{i,s}=1$ for all $s\in \brak{1,\ldots,r}$.

We now come to the proof of the main result of this section.




%

\begin{proof}[Proof of \reth{form}]  Let $i_j \in \mathbb{O}_{j}$,
where $j$ runs over the set $I_0$ defined just before \relem{auxil}. From \relem{auxil0}, the image by $q$ of $\operatorname{\text{Coin}}(q,f_{i_j})$ is independent of the choice of representative $i_j$ in $\mathbb{O}_{j}$. Since the action of $L_{i,i}'$ on $\brak{1,\ldots,n}$ is free, $\lvert L_{i,i}'\rvert= \lvert L_i\rvert=1$, which implies that the map induced by $q$ between the Nielsen coincidence classes of $(q,f_i)$ and the Nielsen fixed point classes of $\phi$ is injective for all $i\in \brak{1,\ldots,n}$. Further, if $j,j'\in I_0$ are distinct then $q(\operatorname{\text{Coin}}(q,f_{i_j}))\cap q(\operatorname{\text{Coin}}(q,f_{i_{j'}}))=\vide$ by \relem{auxil0}, and we conclude that the map between the union of the Nielsen coincidence classes of the pairs $(q,f_{i_j})$, where $i_j\in I_0$, and the Nielsen fixed point classes of $\phi$ is injective too. This map is also surjective by \repr{nielsen0}(\ref{it:coinfix}) and \relem{nielsenclasses}. By \reco{indexessent}, a  coincidence class of $(q,f_{i_j})$ 
is essential if and only if its image under $q$ 
is essential. From this, it follows that $N(\phi)$ is equal to $\sum_{j\in I_0} N(q,f_{i_j})$.
\end{proof}

If the space $X$ is a non-orientable manifold, the situation is more complex, and it is not clear for the moment how to obtain a formula similar to that of \reth{form} in this case.
In the case $n=2$, the hypothesis of that theorem on the action of $L_i$ is always satisfied.

\begin{cor}\label{cor:nielsenphi}
Let $X$ be an orientable compact manifold without boundary and $\phi\colon\thinspace X \multimap X$  a non-split $2$-valued map. Then $N(\phi)=N(q, f_1)=N(q, f_2)$.
\end{cor}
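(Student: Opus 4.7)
The plan is to reduce the corollary directly to \reth{form} by checking that its hypotheses are automatically satisfied when $n=2$ and $\phi$ is non-split, and then by observing that both $i=1$ and $i=2$ are valid choices of representative for the single $L'$-orbit.

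First, I would verify the freeness hypothesis. Since $\phi$ is non-split, $\Phi_{\#}(\pi_{1}(X))$ is not contained in $P_{2}(X)$, so the composition $\tau\circ\Phi_{\#}\colon\thinspace \pi_{1}(X)\to S_{2}$ is surjective, and hence $L'=S_{2}$. For each $i\in \brak{1,2}$, the stabiliser $L'_{i,i}=\setr{\rho\in S_{2}}{\rho(i)=i}$ is trivial, so its action on $\brak{1,2}$ is vacuously free, and the hypothesis of \reth{form} holds.

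Next, since $L'=S_{2}$ acts transitively on $\brak{1,2}$, there is a single orbit, and the indexing set $I_{0}$ of \reth{form} is a singleton. Applying that theorem with the lift $\widehat{\Phi}_{1}=(f_{1},f_{2})$ and representative $i_{1}=1$ yields $N(\phi)=N(q,f_{1})$. To obtain the second equality, I would invoke \repr{nielsen0}(\ref{it:nielsen}), which asserts that $(f_{2},f_{1})$ is also a lift of $\phi\circ q$, and apply \reth{form} to this alternative lift with first-coordinate representative, giving $N(\phi)=N(q,f_{2})$. Combining the two identities establishes the corollary.

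I do not foresee any serious obstacle here: the substantive content, namely the identification of essential Nielsen fixed point classes of $\phi$ with the essential Nielsen coincidence classes of the pairs $(q,f_{i})$ via the map $q$, is already established in \reth{form}. The only role of the hypothesis $n=2$ is to force the stabiliser $L'_{i,i}$ to be trivial (so that the free-action condition is automatic) and to ensure that there is exactly one orbit (so that the sum in \reth{form} reduces to a single term, which may be taken to be either $N(q,f_{1})$ or $N(q,f_{2})$).
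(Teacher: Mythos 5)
Your proof is correct and follows essentially the same route as the paper: non-splitness forces $L'\cong S_{2}$, the stabilisers $L'_{i,i}$ are trivial so the freeness hypothesis of \reth{form} is automatic, and the single orbit $\brak{1,2}$ reduces the sum to one term whose representative may be taken to be either index. Your detour through the second lift $(f_{2},f_{1})$ to obtain $N(\phi)=N(q,f_{2})$ is harmless but unnecessary, since \reth{form} already permits choosing either element of the orbit as its representative.
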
 

\begin{proof} 
Since $\phi$ is non-split, $L\cong L'\cong \Z_{2}$, there is a single orbit $\mathbb{O}_{1}=\brak{1,2}$, and $\lvert L_{1}\rvert=\lvert L_{2}\rvert=1$. Then by \reth{form}$, N(\phi)=N(q, f_j)$ for all $j\in \brak{1,2}$.
\end{proof}


\end{document}